\Crefname{assumption}{Assumption}{Assumptions}
\newtheorem{theorem}{Theorem}
\newtheorem{proposition}{Proposition}
\newtheorem{definition}{Definition}
\newtheorem{lemma}{Lemma}
\newtheorem{corollary}{Corollary}
\theoremstyle{definition}
\newtheorem{remark}{Remark}
\theoremstyle{definition}
\newcommand{\longdash}[1][2em]{\makebox[#1]{$\m@th\smash-\mkern-7mu\cleaders\hbox{$\mkern-2mu\smash-\mkern-2mu$}\hfill\mkern-7mu\smash-$}}
\patchcmd{\NAT@test}{\else \NAT@nm}{\else \NAT@nmfmt{\NAT@nm}}{}{}
\DeclareRobustCommand\citepos
   \let\NAT@nmfmt\NAT@posfmt \NAT@swafalse\let\NAT@ctype\z@\NAT@partrue
\let\NAT@orig@nmfmt\NAT@nmfmt
\def\NAT@posfmt#1{\NAT@orig@nmfmt{#1's}}
\newcommand{\omitskip}{\kern-\arraycolsep}
\DeclareMathOperator*{\D}{\mathcal{D}}
\renewcommand{\P}{\mathbb{P}}
\DeclareMathOperator{\E}{\mathbb{E}}
\newcommand*\dd{\mathop{}\!\mathrm{d}}
\newcommand{\distconvto}{\rightarrow_{d}}
\newcommand{\gam}{\mathsf{Ga}}
\newcommand{\mult}{\mathsf{Mult}}
\begin{document}

\title{Chernoff-type Concentration of Empirical Probabilities in Relative Entropy}

\author{ {\bf F.\ Richard Guo} \\Department of Statistics \\ University of Washington\\
\tt{ricguo@uw.edu}\\ \and 
{\bf Thomas~S.~Richardson} \\Department of Statistics \\ University of Washington\\ \tt{thomasr@u.washington.edu} \\
}

\maketitle

\begin{abstract}
We study the relative entropy of the empirical probability vector with respect to the true probability vector in multinomial sampling of $k$ categories, which, when multiplied by sample size $n$, is also the log-likelihood ratio statistic. We generalize a recent result and show that the moment generating function of the statistic is bounded by a polynomial of degree $n$ on the unit interval, uniformly over all true probability vectors. We characterize the family of polynomials indexed by $(k,n)$ and obtain explicit formulae. Consequently, we develop Chernoff-type tail bounds, including a closed-form version from a large sample expansion of the bound minimizer. Our bound dominates the classic method-of-types bound and is competitive with the state of the art. We demonstrate with an application to estimating the proportion of unseen butterflies.
\end{abstract}

\section{Introduction}
Consider a multinomial experiment on an alphabet of size $k \geq 2$
\begin{equation}
(X_1, \dots, X_k) \sim \mult(n; (p_1, \dots, p_k)),
\end{equation}
where $(p_1, \dots, p_k)$ belongs to the unit simplex $\Delta^{k-1}$. The empirical measure is identified with the probability vector $(\hat{p}_1, \dots, \hat{p}_k) = (X_1/n, \dots, X_k/n)$. We are interested in its entropy relative to the true probability vector $p$, namely
\begin{equation}
\D(\hat{p} \| p) = \sum_{i=1}^{k} \hat{p}_i \log (\hat{p}_i / p_i),
\end{equation}
where conventions $0 \cdot \log(0) = 0$ and $0 \cdot \log(0 / 0) = 0$ are adopted. The quantity $\D(\hat{p} \| p)$ is also known as the Kullback-Leibler divergence of $p$ from $\hat{p}$. By the law of large numbers, $\D(\hat{p} \| p) \rightarrow 0$ as $n \rightarrow \infty$ almost surely.

Note that 
\begin{equation*}
n \D(\hat{p} \| p) = \sum_{i=1}^{k} X_i \log \frac{\hat{p}_i}{p_i} = \log \frac{\binom{n}{X_1,\dots,X_k} \prod_{i=1}^{k} \hat{p}_i^{X_i}}{\binom{n}{X_1,\dots,X_k} \prod_{i=1}^{k} p_i^{X_i}}
\end{equation*}
is also the log-likelihood ratio statistic (without the usual extra factor of 2). By standard asymptotic arguments (see, e.g., \citet[Example 16.1]{van2000asymptotic}), for fixed $k$ and $n \rightarrow \infty$, it holds that
\begin{equation} \label{eqs:asymp}
n \D(\hat{p} \| p) \ \distconvto\  \chi^2_{k-1} / 2 =_{d} \gam((k-1)/2, 1),
\end{equation}
which is a gamma distribution with shape $(k-1)/2$ and rate one. 

We are interested in upper bounding the probability that $n \D(\hat{p} \| p)$ exceeds a given threshold. Tail bounds of this type are of interest to many problems in probability, statistics and machine learning, including Sanov's theorem in large deviations \citep[\S 11.4]{cover2012elements}, goodness-of-fit tests \citep{cressie1984multinomial,jager2007goodness}, construction of non-asymptotic confidence regions \citep{chafai2009confidence,malloy2020optimal} and the performance guarantee of various learning algorithms \citep{vinayak2019maximum, nowak2019tighter}.

The classic bound of this type is 
\begin{equation} \label{eqs:MoT}
\P\left(n \D(\hat{p} \| p) > t \right) \leq \exp(-t) \binom{n + k - 1}{k - 1}, \quad t > 0
\end{equation}
obtained by the ``method of types'' \citep[Lemma II.1]{csiszar1998method}. For fixed $k$ and $t$, this bound is asymptotically tight as $n \rightarrow \infty$, in the sense that the exponent $\exp(-t)$ matches the rate of the asymptotic gamma distribution in \cref{eqs:asymp}. Nevertheless, the bound above is far from optimal. There are recent developments in the literature that provide sharper results. In particular, \citet{mardia2018concentration} and \citet{agrawal2019concentration} provide significant improvements over the method-of-types result by gaining tighter control for the binomial case ($k=2)$, and a reduction from multinomial ($k>2$) to binomial, although their approaches are different. Additionally, bounds on the moments of $\D(\hat{p} \| p)$ have been studied; see  \citet{jiao2017maximum,mardia2018concentration,paninski2003estimation}.

On a side note, by Pinsker's inequality, a tail bound on the relative entropy implies a bound on the total variation. For bounds on the latter, see also \citet[Appendix A.6]{vaart1996weak}, \citet{devroye1983equivalence} and \citet{biau2005asymptotic}.

\section{Bounding the moment generating function}
In a vein similar to that of \citet{agrawal2019concentration}, we develop bounds with Chernoff's method, a classic workhorse for deriving exponential tail bounds;  see, e.g., \citet[\S 2.3]{vershynin2018high}. 
The key is to upper bound the moment generating function (MGF) of $n \D(\hat{p} \| p)$, which is defined as 
\begin{equation}
\varphi_{k,n}(\lambda, p) := \E \exp\left(\lambda n \D(\hat{p} \| p)\right),
\end{equation}
where the expectation is taken over $\mult(n, p=(p_1, \dots, p_k))$. 

It follows that 
\begin{equation} \label{eqs:varphi}
\begin{split}
\varphi_{k,n}(\lambda, p) &= \sum_{X_1, \dots, X_k} \binom{n}{X_1,\dots,X_k} \prod_{i=1}^{k} p_i^{X_i} \left \{ \frac{\binom{n}{X_1,\dots,X_k} \prod_{i=1}^{k} \hat{p}_i^{X_i}}{\binom{n}{X_1,\dots,X_k} \prod_{i=1}^{k} p_i^{X_i}} \right \}^{\lambda} \\
&= \sum_{X_1, \dots, X_k} \binom{n}{X_1,\dots,X_k} \left\{ \prod_{i=1}^{k} \hat{p}_i^{X_i} \right \}^{\lambda} \left\{ \prod_{i=1}^{k} p_i^{X_i} \right\}^{1-\lambda},
\end{split} 
\end{equation}
where $X_1, \dots, X_k$ are non-negative integers that sum to $n$.

\begin{definition} \label{def:G}
For $k \geq 1$, $n \geq 1$, $p \in \Delta^{k-1}$ and $\lambda \in [0,1]$, define
\begin{equation} \label{eqs:G-kn}
G_{k,n}(\lambda, p) := \sum_{X_1,\dots,X_k} \binom{n}{X_1,\dots,X_k} \prod_{i=1}^{k} \left[\lambda X_i / n + (1-\lambda) p_i \right]^{X_i},
\end{equation}
where the summation is over non-negative integers that sum to $n$. 
\end{definition}
By definition, $G_{k,n}(\lambda, p)$ is a polynomial in $\lambda$ of degree at most $n$. For the trivial case of $k=1$, it is easy to see that $G_{1,n}(\lambda) \equiv 1$. 

The multinomial probability in \cref{eqs:varphi} is log-concave in $(p_1, \dots, p_k)$. For $0\leq \lambda \leq 1$, by Jensen's inequality, we have 
\begin{equation*}
\varphi_{k,n}(\lambda, p) \leq G_{k,n}(\lambda, p), \quad p \in \Delta^{k-1}.
\end{equation*}
The obvious obstacle here is to obtain a bound on the RHS that does not depend on the true probability vector $p$.

\subsection{Family of $G_{k,n}(\lambda)$}
First comes a surprising fact noticed by \citet{agrawal2019concentration} in the $k=2$ case.
\begin{proposition} \label{prop:p-independent}
$G_{k,n}(\lambda, p)$ does not depend on $p = (p_1, \dots, p_k)$. 
\end{proposition}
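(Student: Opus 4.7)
The plan is to prove a more general identity from which \cref{prop:p-independent} follows by specialization. For any real $\alpha$ and integer $n \geq 0$, introduce
\begin{equation*}
H_n(b_1, \ldots, b_k) \;:=\; \sum_{X_1 + \cdots + X_k = n} \binom{n}{X_1, \ldots, X_k} \prod_{i=1}^{k} (\alpha X_i + b_i)^{X_i},
\end{equation*}
and show that $H_n$ depends on its arguments only through $s := \sum_i b_i$. Specializing to $\alpha = \lambda/n$ and $b_i = (1-\lambda) p_i$ recovers $G_{k,n}(\lambda, p)$, and since $s = 1-\lambda$ is fixed whenever $p \in \Delta^{k-1}$, \cref{prop:p-independent} follows.

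I would argue by induction on $n$. The base case $n = 0$ gives $H_0 \equiv 1$, which is trivially constant in $b$. For the inductive step, differentiate $H_n$ with respect to $b_j$:
\begin{equation*}
\frac{\partial H_n}{\partial b_j} \;=\; \sum_{X:\,X_j \geq 1} \binom{n}{X} X_j \, (\alpha X_j + b_j)^{X_j - 1} \prod_{i \neq j} (\alpha X_i + b_i)^{X_i}.
\end{equation*}
Using the combinatorial identity $X_j \binom{n}{X_1,\ldots,X_k} = n \binom{n-1}{X_1,\ldots,X_j - 1, \ldots, X_k}$ together with the substitution $Y_j = X_j - 1$, $Y_i = X_i$ for $i \neq j$, the sum reindexes to one over $\sum_i Y_i = n-1$ and collapses to
\begin{equation*}
\frac{\partial H_n}{\partial b_j} \;=\; n \cdot H_{n-1}\bigl(b_1, \ldots, b_{j-1}, b_j + \alpha, b_{j+1}, \ldots, b_k\bigr).
\end{equation*}
By the inductive hypothesis, $H_{n-1}$ depends only on the sum of its arguments, which here equals $\alpha + s$ regardless of $j$. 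Thus $\partial H_n/\partial b_j$ is the same for every $j$, so the directional derivative of $H_n$ along each tangent direction $e_j - e_\ell$ to a level set of $s$ vanishes. Hence $H_n$ is constant on every hyperplane $\{b : \sum_i b_i = \text{const.}\}$, closing the induction.

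The only delicate step is the bookkeeping of the index shift: the factor $X_j$ pulled down by differentiation is exactly what is needed to rewrite $\binom{n}{X}$ as $n \binom{n-1}{\cdot}$, making the reduction from order $n$ to $n-1$ clean. This fortunate match between the exponent $X_i$ and the factor $X_i$ inside $(\alpha X_i + b_i)^{X_i}$ is the structural reason the induction closes, and it is precisely why the $p$-independence phenomenon is not visually obvious from \cref{def:G}.
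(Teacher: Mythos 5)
Your proof is correct, and its engine is the same as the paper's: induction on $n$, differentiation in the base-measure variables, and the identity $X_j\binom{n}{X_1,\dots,X_k}=n\binom{n-1}{X_1,\dots,X_j-1,\dots,X_k}$ followed by the shift $Y_j=X_j-1$. The difference is in the packaging, and it is a genuine improvement. The paper works directly with $G_{k,n}(\lambda,p)$ on the simplex, eliminating $p_k=1-p_1-\dots-p_{k-1}$ and showing $\partial G_{k,n}/\partial p_1\equiv 0$; because the coefficient $\lambda/n$ is tied to $n$, the reduction to order $n-1$ forces the reparameterization $\lambda'=\tfrac{n-1}{n}\lambda$, $p_j'=\tfrac{1-\lambda}{1-\lambda'}p_j$, and the two resulting terms are only identified via the induction hypothesis applied to two \emph{different} probability vectors $p'$ and $q'$. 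By decoupling $\alpha$ from $n$ and letting the $b_i$ range freely, you get the single clean recursion $\partial H_n/\partial b_j = n\,H_{n-1}(b+\alpha e_j)$ with no rescaling, and the stronger statement that $H_n$ depends only on $\sum_i b_i$ makes the closing step (all partials agree, so the gradient is normal to the level sets of the sum) immediate. Your formulation also isolates the one structural coincidence that makes everything work — the exponent $X_i$ matching the factor $X_i$ pulled down by differentiation — and, as a bonus, essentially the same shift computation is what underlies the paper's recurrence in \cref{prop:recurrence}, so your $H_n$ identity could serve both results.
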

\begin{proof}
This is true for $k=1$. Fix any $k \geq 2$, we prove by induction on $n$ that $G_{k,n}(\lambda, p)$ does not depend on $p$. For the base case, 
\begin{equation*}
G_{k,1}(\lambda, p) = \sum_{i=1}^{k} \left(\lambda + (1-\lambda) p_i \right) = k \lambda + 1 - \lambda,
\end{equation*}
which does not depend on $p$. 

Suppose $G_{k,m}(\lambda,p) \equiv G_{k,m}(\lambda)$ for $m \leq n-1$. We now show that $G_{k,n}(\lambda, p)$ does not depend on $p$. Since $p_k = 1 - p_1 - \dots - p_{k-1}$, it suffices to verify that $\partial G_{k,n}(\lambda, p)/ \partial p_i \equiv 0$ for $i=1,\dots,k-1$. Further, by symmetry, it suffices to show $\partial G_{k,n}(\lambda, p)/ \partial p_1 \equiv 0$. Replacing $p_k$ with $(1-p_1 - \dots - p_{k-1})$, we have
\begin{multline*}
G_{k,n}(\lambda, p) = \sum_{X_1,\dots,X_k} \binom{n}{X_1,\dots,X_k} \prod_{j=2}^{k-1} \left[\lambda X_j / n + (1-\lambda) p_j \right]^{X_j}  \\
\times \left[\lambda X_1 / n + (1-\lambda) p_1 \right]^{X_1} \left[\lambda X_k / n + (1-\lambda)(1-p_1-\dots-p_{k-1})  \right]^{X_k},
\end{multline*}
and 
\begin{multline*}
\frac{\partial G_{k,n}(\lambda, p)}{\partial p_1} = \sum_{X_1,\dots,X_k} \binom{n}{X_1,\dots,X_k} \prod_{j=2}^{k-1} \left[\lambda X_j / n + (1-\lambda) p_j \right]^{X_j} \\
\bigg\{ (1-\lambda) X_1 \left[\lambda X_1 / n + (1-\lambda) p_1 \right]^{X_1-1} \left[\lambda X_k / n + (1-\lambda) p_k \right]^{X_k} \\
-(1-\lambda) X_k \left[\lambda X_1 / n + (1-\lambda) p_1 \right]^{X_1} \left[\lambda X_k / n + (1-\lambda) p_k \right]^{X_k-1} \bigg\}.
\end{multline*}
Hence, it suffices to show
\begin{multline*}
\sum_{X_1,\dots,X_k} \binom{n}{X_1,\dots,X_k} \prod_{j=2}^{k-1} \left[\lambda X_j / n + (1-\lambda) p_j \right]^{X_j} \\
\times X_1 \left[\lambda X_1 / n + (1-\lambda) p_1 \right]^{X_1-1} \left[\lambda X_k / n + (1-\lambda) p_k \right]^{X_k}  \equiv \sum_{X_1,\dots,X_k} \binom{n}{X_1,\dots,X_k} \\
\times \prod_{j=2}^{k-1} \left[\lambda X_j / n + (1-\lambda) p_j \right]^{X_j} 
X_k \left[\lambda X_1 / n + (1-\lambda) p_1 \right]^{X_1} \left[\lambda X_k / n + (1-\lambda) p_k \right]^{X_k-1}.
\end{multline*}
We first simplify the LHS. Clearly, those summands with $X_1=0$ are zero and can be dropped. For $X_1 \geq 1$, $X_1 \binom{n}{X_1,\dots, X_k} = n \binom{n-1}{X_1-1,X_2,\dots,X_k}$. Let $\lambda' := \lambda (n-1)/n$. For $j=2,\dots,k$, by setting $p_j' := \frac{1-\lambda}{1-\lambda'} p_j < p_j$, we have
\begin{equation*}
\lambda X_j / n + (1-\lambda) p_j  = \lambda' X_j / (n-1) + (1-\lambda') p_j'.
\end{equation*}
Further, letting $p_1' := 1 - \sum_{j=2}^{k}p_j'$ it is easy to see that
\begin{equation*}
\lambda' \frac{X_1-1}{n-1} + (1-\lambda') p_1' = \lambda \frac{X_1}{n} + (1-\lambda) p_1.
\end{equation*}
Therefore, by introducing $X_1' = X_1 - 1$, we have
\begin{equation*}
\begin{split}
\text{LHS} &= n \sum_{X_1',X_2,\dots,X_k} \binom{n-1}{X_1',X_2,\dots,X_k} \left[\lambda' X_1' / (n-1) + (1-\lambda') p_1' \right]^{X_1'} \\
& \quad \quad \times \prod_{j=2}^{k} \left[\lambda' X_j / (n-1) + (1-\lambda') p_j' \right]^{X_j} \\
&= n G_{k,n-1}(\lambda', p'),
\end{split}
\end{equation*} 
where the summation is over non-negative integers $X_1', X_2, \dots, X_k$ summing to $n-1$. 
For the RHS, similarly, let $q_j' = \frac{1-\lambda}{1-\lambda'} p_j$ for $j=1,\dots,k-1$ and $q_{k}' = 1 - \sum_{j=1}^{k-1} q_j'$. With $X_k' = X_k - 1$, it follows that
\begin{equation*}
\begin{split}
\text{RHS} &= n \sum_{X_1,\dots,X_{k-1},X_k'} \binom{n-1}{X_1,\dots,X_{k-1},X_k'} \prod_{j=1}^{k-1} \left[\lambda' X_j / (n-1) + (1-\lambda') q_j' \right]^{X_j} \\
& \quad \quad \times \left[\lambda' X_k' / (n-1) + (1-\lambda') q_k' \right]^{X_k'} \\
&= n G_{k, n-1}(\lambda', q').
\end{split}
\end{equation*}
Finally, by the induction hypothesis, 
\begin{equation*}
\text{LHS} = n G_{k,n-1}(\lambda', p') = n G_{k,n-1}(\lambda', q') = \text{RHS}.
\end{equation*}
\end{proof}

In view of this fact, we shall write $G_{k,n}(\lambda)$ in place of $G_{k,n}(\lambda, p)$. The set of polynomials $\{G_{k,n}(\lambda)\}$ are characterized by the following recurrence. 

\begin{proposition} \label{prop:recurrence}
For $0 \leq \lambda \leq 1$, it holds that 
\begin{equation} \label{eqs:recurrence}
G_{k,n}(\lambda) = G_{k-1,n}(\lambda) + \lambda G_{k,n-1}\left(\frac{n-1}{n} \lambda \right), \quad k \geq 2, \  n \geq 1
\end{equation}
with $G_{1,n}(\lambda) \equiv 1$ and  $G_{k,0}(\lambda) := 1$.
\end{proposition}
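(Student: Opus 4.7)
The plan is to exploit \cref{prop:p-independent}: because $G_{k,n}(\lambda, p)$ does not depend on $p$, I am free to evaluate it at a convenient boundary point. I choose $p_k = 0$, with $(p_1, \dots, p_{k-1})$ any vector in $\Delta^{k-2}$. This choice is tailored so that the $k$-th factor of the summand in \cref{eqs:G-kn} collapses to $(\lambda X_k/n)^{X_k}$, and the sum then splits cleanly along $X_k = 0$ versus $X_k \geq 1$.

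On the $X_k = 0$ slice, the $k$-th factor equals $1$ and what remains is exactly $G_{k-1,n}(\lambda,(p_1,\dots,p_{k-1})) = G_{k-1,n}(\lambda)$ by definition, which contributes the first term on the right-hand side of \cref{eqs:recurrence}. On the $X_k \geq 1$ slice, I factor out one copy of $\lambda X_k/n$ and apply the absorption identity $X_k \binom{n}{X_1, \dots, X_k} = n\binom{n-1}{X_1, \dots, X_{k-1}, X_k - 1}$---the same identity that drove the inductive step of \cref{prop:p-independent}. Substituting $Y_k = X_k - 1$ (and $Y_i = X_i$ for $i < k$) turns the $k$-th factor into $(\lambda(Y_k+1)/n)^{Y_k}$, which I rewrite as $(\lambda' Y_k/(n-1) + \lambda/n)^{Y_k}$ using $\lambda' := \lambda(n-1)/n$, and pulls out an overall factor of $\lambda$.

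The key technical step is matching this to the canonical form of $G_{k, n-1}(\lambda', p'')$. Setting $p_k'' := \lambda/(n(1-\lambda) + \lambda)$ makes $\lambda/n = (1-\lambda') p_k''$, and $p_i'' := \tfrac{1-\lambda}{1-\lambda'} p_i$ for $i < k$ makes $(1-\lambda) p_i = (1-\lambda') p_i''$; a short computation using $p_k = 0$ verifies $\sum_i p_i'' = 1$, so $p'' \in \Delta^{k-1}$. The remaining sum is then precisely $\lambda G_{k, n-1}(\lambda', p'')$, which equals $\lambda G_{k, n-1}(\lambda')$ by another invocation of \cref{prop:p-independent}. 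Combining the two slices yields \cref{eqs:recurrence}. The boundary cases are immediate: $G_{1, n}(\lambda) \equiv 1$ from the single summand $X_1 = n$ giving $(\lambda + (1-\lambda))^n = 1$, and $G_{k, 0}(\lambda) = 1$ by convention.

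The main obstacle I anticipate is spotting the right move---namely, evaluating at the boundary $p_k = 0$ to isolate a $G_{k-1, n}$ piece, and then re-parameterizing the $X_k \geq 1$ remainder with the shifted $(\lambda', p'')$. Once the evaluation point is chosen, the algebra is a minor rearrangement of the bookkeeping already carried out in the proof of \cref{prop:p-independent}, reorganized to produce an identity rather than to verify a vanishing derivative.
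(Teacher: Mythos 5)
Your proof is correct and follows essentially the same route as the paper's: evaluate at $p_k=0$ via \cref{prop:p-independent}, split the sum at $X_k=0$ versus $X_k\geq 1$ to peel off $G_{k-1,n}(\lambda)$, then use the absorption identity and the reparameterization $\lambda'=\tfrac{n-1}{n}\lambda$ with the rescaled probability vector (your $p''$ is exactly the paper's $p'$, including $p_k'=\tfrac{\lambda/n}{1-\lambda'}$) to identify the remainder as $\lambda G_{k,n-1}(\lambda')$. No gaps.
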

By \cref{prop:p-independent}, we have the freedom to choose $p$ in the definition to evaluate $G_{k,n}(\lambda)$. In particular, by choosing $p_k=0$ and $p_1 + \dots + p_{k-1} = 1$, we can decompose $G_{k,n}(\lambda)$ into $G_{k-1,n}(\lambda)$ and a remainder. By a similar manipulation used in the previous proof, the remainder can be expressed in terms of $G_{k,n-1}$. We leave the detailed proof to the Appendix.

\begin{theorem}  \label{thm:polynomial}
For $k \geq 2$, $n \geq 0$ and $0\leq \lambda \leq 1$, it holds that 
\begin{equation} \label{eqs:polynomial}
G_{k,n}(\lambda) = \sum_{m=0}^{n} \frac{n!}{n^m (n-m)!}\binom{m+k-2}{k-2} \lambda^{m}.
\end{equation}
\end{theorem}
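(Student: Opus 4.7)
The plan is to prove the closed form by induction on $n$, with an inner induction on $k$, using the recurrence from \cref{prop:recurrence}. Denote the candidate expression by
\[
F_{k,n}(\lambda) := \sum_{m=0}^{n} \frac{n!}{n^m (n-m)!}\binom{m+k-2}{k-2}\lambda^m.
\]
The aim is to show $G_{k,n}(\lambda) = F_{k,n}(\lambda)$ for all $k \geq 2$, $n \geq 0$.

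The base case $n = 0$ is immediate, since $F_{k,0}(\lambda) = \binom{k-2}{k-2} = 1 = G_{k,0}(\lambda)$. To initialize the $k$-direction at $k = 2$, I apply the recurrence $G_{2,n}(\lambda) = 1 + \lambda G_{2,n-1}((n-1)\lambda/n)$, which uses $G_{1,n}\equiv 1$. Assuming inductively that $G_{2,n-1} = F_{2,n-1}$, a direct verification shows $F_{2,n}$ satisfies the same identity, since $\binom{m}{0} = 1$ makes the algebra collapse cleanly.

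For the inductive step with $k \geq 3$ and $n \geq 1$, the inductive hypothesis lets me substitute $F_{k-1,n}$ and $F_{k,n-1}$ for $G_{k-1,n}$ and $G_{k,n-1}$, so it remains to verify
\[
F_{k,n}(\lambda) \;=\; F_{k-1,n}(\lambda) + \lambda F_{k,n-1}\!\left(\tfrac{n-1}{n}\lambda\right)
\]
as a polynomial identity in $\lambda$. The key simplification is that inside $F_{k,n-1}((n-1)\lambda/n)$, the factor $(n-1)^m$ in the denominator cancels with $((n-1)/n)^m$ coming from the argument. After multiplying by $\lambda$ and reindexing $m \to m-1$, the prefactor $\frac{(n-1)!}{n^{m-1}(n-m)!}$ is promoted to $\frac{n!}{n^m(n-m)!}$, so the second term becomes $\sum_{m=1}^{n} \frac{n!}{n^m(n-m)!}\binom{m+k-3}{k-2}\lambda^m$. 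Added to $F_{k-1,n}(\lambda) = \sum_{m=0}^{n}\frac{n!}{n^m(n-m)!}\binom{m+k-3}{k-3}\lambda^m$, the coefficient of $\lambda^m$ becomes $\binom{m+k-3}{k-3} + \binom{m+k-3}{k-2}$, which equals $\binom{m+k-2}{k-2}$ by Pascal's rule; the $m=0$ case is handled separately via $\binom{k-3}{k-3}=\binom{k-2}{k-2}=1$.

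The argument is essentially bookkeeping, and the only combinatorial content is Pascal's rule, so I expect no hidden obstruction. The main risk is miscounting factors of $n$ versus $n-1$ during the cancellation and reindexing, so that step deserves the most care; otherwise, the closed form is in effect uniquely determined by the recurrence together with the boundary conditions $G_{k,0} \equiv 1$ and $G_{1,n}\equiv 1$.
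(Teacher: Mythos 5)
Your proof is correct and follows essentially the same route as the paper: double induction via the recurrence of \cref{prop:recurrence}, with the $k=2$ column initialized from $G_{1,n}\equiv 1$, the cancellation of $(n-1)^m$ against $((n-1)/n)^m$, the reindexing that promotes $\frac{(n-1)!}{n^{m-1}(n-m)!}$ to $\frac{n!}{n^m(n-m)!}$, and Pascal's rule to combine the coefficients. No gaps; the bookkeeping checks out.
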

\begin{proof}
We prove by induction. For the base case, the formula gives $G_{k,0}(\lambda) \equiv 1$ for $k \geq 2$, which matches the value imposed by \cref{prop:recurrence}. 

First, supposing the formula holds for $G_{2,n-1}$, we show that it also holds for $G_{2,n}$. By \cref{prop:recurrence}, it is easy to check that
\begin{equation*}
\begin{split}
G_{2,n}(\lambda) &= G_{1,n}(\lambda) + \lambda G_{2,n-1}(\lambda(n-1)/n) \\
&= 1 + \sum_{m=0}^{n-1} \frac{(n-1)!}{n^m (n-m-1)!} \lambda^{m+1} = \sum_{m=0}^{n} \frac{n!}{n^m (n-m)!} \lambda^m.
\end{split}
\end{equation*}

Now, for any $k \geq 3$ and $n \geq 1$, suppose the formula holds for $G_{k-1,n}$ and $G_{k,n-1}$. We show that it also holds for $G_{k,n}$. By \cref{prop:recurrence}, we have
\begin{equation*}
\begin{split}
G_{k,n}(\lambda) &= G_{k-1,n}(\lambda) + \lambda G_{k,n-1}(\lambda(n-1)/n) \\
&= \sum_{m=0}^{n} \frac{n!}{n^m (n-m)!}\binom{m+k-3}{k-3} \lambda^{m} + \sum_{m=0}^{n-1} \frac{(n-1)!}{n^m (n-m-1)!}\binom{m+k-2}{k-2} \lambda^{m+1} \\
&= \sum_{m=0}^{n} \frac{n!}{n^m (n-m)!}\binom{m+k-3}{k-3} \lambda^{m} + \sum_{m=1}^{n} \frac{n!}{n^{m} (n-m)!}\binom{m+k-3}{k-2} \lambda^{m} \\
&= \sum_{m=0}^{n} \frac{n!}{n^m (n-m)!}\binom{m+k-2}{k-2} \lambda^{m},
\end{split}
\end{equation*}
where in the last step the addition formula $\binom{n}{l} = \binom{n-1}{l} + \binom{n-1}{l-1}$ is used \citep[\S 5.1]{gkp-book}.  
\end{proof}

\begin{remark}
$G_{k,n}(\lambda)$ can be rewritten as
\begin{equation*}
G_{k,n}(\lambda) = \sum_{m=0}^n \binom{n}{m} (k-1)^{(m)} (\lambda  / n)^m,
\end{equation*}
where $x^{(m)} = x (x+1) \dots (x+m-1)$ is the rising factorial. 
\end{remark}

\begin{lemma} \label{eqs:Gkn-deriv-form}
For $k \geq 2$, 
\begin{equation*}
G_{k,n}(\lambda) = \frac{1}{(k-2)!} \frac{\dd^{k-2}}{\dd \lambda^{k-2}} \left(\lambda^{k-2} G_{2,n}(\lambda) \right).
\end{equation*}
\end{lemma}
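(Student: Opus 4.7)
The plan is to derive the identity directly from the closed-form expression in \cref{thm:polynomial}, since that theorem has already given us a handle on both sides of the equation. No combinatorial trick is needed; this lemma is essentially a bookkeeping consequence of the formula.

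First I would substitute the $k=2$ case of \cref{thm:polynomial} to write
\[
\lambda^{k-2} G_{2,n}(\lambda) = \sum_{m=0}^{n} \frac{n!}{n^m (n-m)!}\, \lambda^{m+k-2}.
\]
Then I would apply the $(k-2)$-fold derivative termwise, using the elementary fact that $\frac{d^{k-2}}{d\lambda^{k-2}} \lambda^{m+k-2} = \frac{(m+k-2)!}{m!}\, \lambda^{m}$, which is valid because $m+k-2 \geq k-2$ for every $m \geq 0$ in the sum.

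Dividing by $(k-2)!$ converts the falling-factorial coefficient into $\binom{m+k-2}{k-2}$, yielding
\[
\frac{1}{(k-2)!} \frac{d^{k-2}}{d\lambda^{k-2}} \left(\lambda^{k-2} G_{2,n}(\lambda) \right) = \sum_{m=0}^{n} \frac{n!}{n^m (n-m)!}\binom{m+k-2}{k-2} \lambda^{m},
\]
which is exactly the expression for $G_{k,n}(\lambda)$ given by \cref{thm:polynomial}.

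There is no real obstacle here; the only thing to watch is the edge case $k=2$, where the statement degenerates to $G_{2,n}(\lambda) = G_{2,n}(\lambda)$ under the convention that the zeroth derivative and $0!$ both equal one, and this is consistent with the computation above.
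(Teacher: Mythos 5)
Your proposal is correct and matches the paper's own argument: both differentiate the closed-form polynomial for $\lambda^{k-2}G_{2,n}(\lambda)$ termwise $(k-2)$ times and identify the result with the formula of \cref{thm:polynomial} for general $k$ (the paper merely phrases the coefficients via rising factorials rather than binomial coefficients). Your handling of the $k=2$ edge case is also fine.
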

\begin{proof}
\begin{equation*}
\begin{split}
\frac{1}{(k-2)!} \frac{\dd^{k-2}}{\dd \lambda^{k-2}} \left(\lambda^{k-2} G_{2,n}(\lambda) \right) &= \sum_{m=0}^{n} \binom{n}{m} \frac{m!}{n^m (k-2)!} \frac{\dd^{k-2}}{\dd \lambda^{k-2}} \lambda^{m+k-2} \\
&= \sum_{m=0}^{n} \binom{n}{m} \frac{m!}{n^m (k-2)!} \frac{(m+k-2)!}{m!} \lambda^m \\
&= \sum_{m=0}^{n} \binom{n}{m} (k-1)^{(m)} (\lambda / n)^m = G_{k,n}(\lambda).
\end{split}
\end{equation*}
\end{proof}

\begin{remark}
For $k \geq 2$, $G_{k,n}(\lambda)$ is not a moment generating function of some distribution. To see this, suppose $G_{k,n}(\lambda)$ is the MGF of some random variable $Y$. Since $G_{k,n}(\lambda)$ is a polynomial of degree $n$, then $\E Y^{2n} = G_{k,n}^{(2n)}(0) = 0$, which implies $Y$ is zero almost surely. However, the MGF of zero is identically one. 
\end{remark}

A few polynomials $G_{k,n}(\lambda)$ are listed in \cref{tab:poly}.

\begin{table}[!htb]
\caption{Polynomials $G_{k,n}(\lambda)$}
\label{tab:poly}
\begin{center}
\begin{tabular}{@{}lcccc@{}}
\toprule
& $n=1$ & $n=2$ & $n=3$ & $n=4$ \\ \midrule
$k=2$ & $1+\lambda$ & $1+\lambda + \frac{1}{2}\lambda^2$ &  $1+\lambda + \frac{2}{3} \lambda^2 + \frac{2}{9} \lambda^3 $ &  $1 + \lambda + \frac{3}{4} \lambda^2 + \frac{3}{8} \lambda^3 + \frac{3}{32} \lambda^4$ \\[3pt]
$k=3$ & $1+ 2\lambda$ & $1+ 2\lambda + \frac{3}{2}\lambda^2$ &  $1+ 2\lambda + 2\lambda^2 + \frac{8}{9} \lambda^3 $ &  $1 + 2\lambda + \frac{9}{4} \lambda^2 + \frac{3}{2} \lambda^3 + \frac{15}{32} \lambda^4$ \\[3pt]
$k=4$ & $1+ 3\lambda$ & $1+ 3\lambda + 3\lambda^2$ &  $1+ 3\lambda + 4\lambda^2 + \frac{20}{9} \lambda^3 $ &  $1 + 3\lambda + \frac{9}{2} \lambda^2 + \frac{15}{4} \lambda^3 + \frac{45}{32} \lambda^4$ \\ \bottomrule
\end{tabular}
\end{center}
\end{table}

\subsection{Asymptotic properties}
We consider the asymptotic behaviors of $G_{k,n}(\lambda)$, which can inform how well it captures the right dependence on $k$ and $n$. 

\subsubsection{$n \rightarrow \infty$ under fixed $k$}
\begin{lemma} \label{lem:monotone}
For $k \geq 2$, $G_{k,n}(\lambda)$ increases in $n$. 
\end{lemma}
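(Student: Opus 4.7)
My plan is to read off the coefficients of $G_{k,n}(\lambda)$ from the explicit formula in \cref{thm:polynomial} and show that each coefficient, viewed as a function of $n$, is non-decreasing. Since every coefficient and every power of $\lambda$ is non-negative on $[0,1]$, termwise monotonicity will immediately give the claim.

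Concretely, the coefficient of $\lambda^m$ in $G_{k,n}(\lambda)$ can be rewritten as
\begin{equation*}
a_m(n) \;:=\; \frac{n!}{n^m (n-m)!}\binom{m+k-2}{k-2} \;=\; \binom{m+k-2}{k-2} \prod_{i=0}^{m-1}\left(1 - \frac{i}{n}\right),
\end{equation*}
for $0 \leq m \leq n$ (and $a_m(n) = 0$ for $m > n$). The first step is to observe that each factor $1 - i/n$ (with $i \geq 0$) is non-decreasing in $n$, so the finite product defining $a_m(n)$ is non-decreasing in $n$ as well. Hence $a_m(n+1) \geq a_m(n)$ for every fixed $m \leq n$.

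The second step is to handle the ``new'' top coefficient: $a_{n+1}(n+1) > 0$, while $G_{k,n}$ has no $\lambda^{n+1}$ term. Combining, for any $\lambda \geq 0$,
\begin{equation*}
G_{k,n+1}(\lambda) - G_{k,n}(\lambda) \;=\; a_{n+1}(n+1)\lambda^{n+1} + \sum_{m=0}^{n} \bigl(a_m(n+1) - a_m(n)\bigr)\lambda^m \;\geq\; 0,
\end{equation*}
which proves the lemma for all $\lambda \geq 0$, in particular on $[0,1]$.

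There is no real obstacle: the only thing to double-check is the algebraic simplification that turns $n!/(n^m(n-m)!)$ into the product $\prod_{i=0}^{m-1}(1-i/n)$, which is the standard identity $n(n-1)\cdots(n-m+1)/n^m$. Alternatively, one could induct on $n$ using the recurrence \cref{eqs:recurrence}, but because $G_{k,n-1}$ is evaluated at $\lambda(n-1)/n$ rather than at $\lambda$, that route seems to require the coefficientwise argument anyway, so the direct approach is cleaner.
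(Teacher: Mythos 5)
Your proof is correct and follows essentially the same route as the paper: both compare the coefficient of $\lambda^m$ in $G_{k,n}$ and $G_{k,n+1}$ via the explicit formula of \cref{thm:polynomial} and use non-negativity of all terms. The only difference is cosmetic: the paper verifies the coefficient inequality by cancelling factors and invoking Bernoulli's inequality $(1-\tfrac{1}{n})^m \geq 1-\tfrac{m}{n}$, whereas you factor the coefficient as $\prod_{i=0}^{m-1}(1-i/n)$ and note each (non-negative) factor increases in $n$ --- an equally valid, arguably slightly cleaner, final step.
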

\begin{proof}
By \cref{thm:polynomial}, it suffices to show that 
\begin{equation*}
\frac{n!}{n^m (n-m)!} \geq \frac{(n-1)!}{(n-1)^m (n-m-1)!}
\end{equation*}
for $m=0,\dots,n$. By canceling factors from both sides, this is equivalent to $(1- \frac{1}{n})^m \geq 1 - \frac{m}{n}$, which holds by Bernoulli's inequality. 
\end{proof}

\begin{proposition} \label{prop:large-n-limit}
For $0 \leq \lambda <1$ and any fixed $k \geq 2$, we have
\begin{equation}
G_{k,n}(\lambda) \nearrow G_{k,\infty}(\lambda) := (1-\lambda)^{-(k-1)} , \quad \text{as } n \rightarrow \infty.
\end{equation}
\end{proposition}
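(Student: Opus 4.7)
The plan is to work directly from the explicit formula in Theorem \ref{thm:polynomial}, combining it with the monotonicity already established in Lemma \ref{lem:monotone}. Since monotonicity gives the ``$\nearrow$'' part for free, the remaining task is to identify the pointwise limit and justify an interchange of limit and summation.

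First, I would rewrite the coefficient $\frac{n!}{n^m(n-m)!}$ as the finite product $\prod_{j=0}^{m-1}(1 - j/n)$, making it transparent that this quantity lies in $[0,1]$ and converges to $1$ as $n \to \infty$ for each fixed $m$. Setting $a_{n,m} := \prod_{j=0}^{m-1}(1-j/n)$ (with $a_{n,m} = 0$ for $m > n$), Theorem \ref{thm:polynomial} reads
\begin{equation*}
G_{k,n}(\lambda) = \sum_{m=0}^{\infty} a_{n,m} \binom{m+k-2}{k-2} \lambda^m.
\end{equation*}

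Next, since $0 \leq a_{n,m} \leq 1$, each term is dominated by $\binom{m+k-2}{k-2} \lambda^m$. For $0 \leq \lambda < 1$, the generalized binomial series identity
\begin{equation*}
\sum_{m=0}^{\infty} \binom{m+k-2}{k-2} \lambda^m = (1-\lambda)^{-(k-1)}
\end{equation*}
(the standard negative-binomial / Taylor expansion of $(1-\lambda)^{-(k-1)}$ about $\lambda=0$) shows this dominating series is finite. Therefore, dominated convergence (applied to counting measure on $\mathbb{N}$) lets me pass to the limit termwise, yielding $G_{k,n}(\lambda) \to (1-\lambda)^{-(k-1)}$. Combined with Lemma \ref{lem:monotone}, the convergence is monotone increasing.

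I do not anticipate a real obstacle: the only care needed is the standard domination argument for swapping $\lim_n$ and $\sum_m$, plus recognizing the closed form of the generating function $\sum_m \binom{m+k-2}{k-2}\lambda^m$. One could alternatively avoid dominated convergence by splitting the sum at some large $M$, bounding the tail uniformly in $n$ by the tail of the convergent series $\sum_m \binom{m+k-2}{k-2}\lambda^m$, and using $a_{n,m} \to 1$ on the finite head; this gives the same conclusion without invoking any measure-theoretic machinery.
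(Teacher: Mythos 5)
Your proof is correct, but it takes a genuinely different route from the paper's. You work entirely from the explicit formula of \cref{thm:polynomial}: writing the coefficient as $a_{n,m}=\prod_{j=0}^{m-1}(1-j/n)\in[0,1]$, dominating the series by the negative-binomial expansion $\sum_{m\ge 0}\binom{m+k-2}{k-2}\lambda^m=(1-\lambda)^{-(k-1)}$, and passing to the limit termwise. The paper instead only uses \cref{thm:polynomial} lightly (for the crude bound $G_{2,n}(\lambda)\le\sum_m\lambda^m$ and for monotonicity in $\lambda$) and identifies the limit through the recurrence of \cref{prop:recurrence}: after \cref{lem:monotone} guarantees a limit $G_{k,\infty}$ exists, taking $n\to\infty$ in \cref{eqs:recurrence} yields the functional equation $G_{k,\infty}(\lambda)=G_{k-1,\infty}(\lambda)+\lambda G_{k,\infty}(\lambda^{-})$, and a lower-semicontinuity-plus-monotonicity argument is needed to replace $G_{k,\infty}(\lambda^{-})$ by $G_{k,\infty}(\lambda)$ before solving for $(1-\lambda)^{-(k-1)}$ by induction on $k$. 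Your argument is more direct and sidesteps that semicontinuity step entirely; indeed, since $a_{n,m}$ is itself nondecreasing in $n$ for fixed $m$ (the same Bernoulli-inequality computation as in \cref{lem:monotone}), you could even invoke monotone convergence for the termwise limit and obtain the increasing convergence in one stroke. What the paper's route buys is independence from the closed form of the generating function $\sum_m\binom{m+k-2}{k-2}\lambda^m$, deriving the limit purely from the recursive structure; what yours buys is brevity and the avoidance of the left-limit subtlety.
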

\begin{proof}
For $k=2$ and $\lambda \in [0,1)$, 
\begin{equation*}
G_{2,n}(\lambda) = \sum_{m=0}^{n} \frac{n!}{n^m (n-m)!} \lambda^m \leq \sum_{m=0}^{n} \lambda^m \rightarrow \frac{1}{1-\lambda},
\end{equation*}
where we used 
\begin{equation*}
\frac{n!}{n^m(n-m)!} = \frac{n \times (n-1) \times \dots (n-m+1)}{n \times  \dots \times n} \leq 1.
\end{equation*}
Further, by \cref{lem:monotone}, $G_{2,n}(\lambda)$ must converge as $n \rightarrow \infty$ for $\lambda \in [0, 1)$. Suppose the limit is $G_{2,\infty}(\lambda)$. Clearly, $G_{2,\infty}(\lambda) = \lim_{n} G_{2,n}(\lambda) = \sup_{n} G_{2,n}(\lambda)$ is lower-semicontinuous. 
Taking limits on both sides of \cref{eqs:recurrence}, we have
\begin{equation*}
G_{2,\infty}(\lambda) = 1 + \lambda G_{2,\infty}(\lambda^{-}),
\end{equation*}
where we note $\frac{n-1}{n} \lambda \nearrow \lambda$. Meanwhile, by \cref{thm:polynomial}, $G_{2,n}(\lambda)$ is increasing in $\lambda$. Hence, we have $G_{2,\infty}(\lambda^{-}) = G_{2,\infty}(\lambda)$ by lower-semicontinuity and monotonicity of $G_{2,\infty}(\lambda)$. It follows that $G_{2,\infty} = (1-\lambda)^{-1}$. Applying the same reasoning to $k=3$, we have
\begin{equation*}
G_{3,\infty}(\lambda) = G_{2,\infty}(\lambda) + \lambda G_{3,\infty}(\lambda),
\end{equation*}
and hence $G_{3,\infty} = (1-\lambda)^{-2}$. Iterating this process, we get $G_{k,n}(\lambda) \nearrow (1-\lambda)^{-(k-1)}$ for $\lambda \in [0,1)$ and $k \geq 2$. 
\end{proof}

Note that $G_{k,\infty}(\lambda) = (1-\lambda)^{-(k-1)}$ is the moment generating function of $\gam(k-1,1)$. Further, $n \D(\hat{p} \| p) \distconvto \gam((k-1)/2, 1)$. This means, for fixed $k$ and $n \rightarrow \infty$, $G_{k,n}(\lambda)$ is asymptotically tight in the exponent (rate parameter of gamma), but loose by a factor of $2$ in the polynomial term (shape parameter of gamma).

\subsubsection{$k \rightarrow \infty$ under fixed $n$}
In the following, for two sequences $a_\nu$ and $b_\nu$ that are positive for large enough $\nu$, we write $a_\nu \gtrsim b_\nu$ (or $b_\nu \lesssim a_\nu$) as $\nu \rightarrow \infty$ if there exists $c>0$ such that $a_\nu \geq c b_\nu$ for all $\nu$ large enough. If $a_\nu \lesssim b_\nu$ and $a_\nu \gtrsim b_\nu$, we write $a_\nu \asymp b_\nu$ as $\nu \rightarrow \infty$. 

\begin{proposition} \label{prop:G-log-k}
For fixed $0 < \lambda \leq 1$ and $n \geq 1$, as $k \rightarrow \infty$ we have
\begin{equation}
\log G_{k,n}(\lambda) \asymp n \log k.
\end{equation}
\end{proposition}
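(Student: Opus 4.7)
The plan is to read off both directions directly from the explicit formula of \cref{thm:polynomial},
\begin{equation*}
G_{k,n}(\lambda) = \sum_{m=0}^{n} \frac{n!}{n^m (n-m)!}\binom{m+k-2}{k-2} \lambda^{m},
\end{equation*}
noting that, with $n$ and $\lambda$ held fixed, the only $k$-dependence lives in the binomial coefficients $\binom{m+k-2}{k-2} = \binom{m+k-2}{m}$, which is a polynomial in $k$ of degree exactly $m$. The asymptotics should therefore be governed by the top-degree term $m = n$, predicting $G_{k,n}(\lambda) \asymp k^n$ and hence $\log G_{k,n}(\lambda) \asymp n\log k$.

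For the upper direction, I would use that $\binom{m+k-2}{m}$ is increasing in $m$ for $k \geq 2$ (going from $m$ to $m+1$ multiplies by $(m+k-1)/(m+1) \geq 1$), so we may replace every $\binom{m+k-2}{m}$ by $\binom{n+k-2}{n}$ and pull it out of the sum. Using the crude bound $\binom{n+k-2}{n} \leq (k+n)^{n}/n!$ and absorbing the remaining $\lambda$-sum into a constant $C_{n,\lambda}$ depending only on $n$ and $\lambda$ yields
\begin{equation*}
\log G_{k,n}(\lambda) \leq n\log(k+n) + C_{n,\lambda},
\end{equation*}
which is $\lesssim n \log k$ as $k \to \infty$.

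For the lower direction, I would retain only the $m=n$ summand and bound the falling-style product defining $\binom{n+k-2}{n} = \tfrac{1}{n!}(k-1)k(k+1)\cdots(k+n-2)$ from below by $(k-1)^{n}/n!$ (its smallest factor raised to the $n$th power). This gives
\begin{equation*}
G_{k,n}(\lambda) \geq \frac{n!}{n^n}\cdot\frac{(k-1)^n}{n!}\cdot\lambda^n = \frac{\lambda^n (k-1)^n}{n^n},
\end{equation*}
so $\log G_{k,n}(\lambda) \geq n\log(k-1) + n\log(\lambda/n) \gtrsim n\log k$ for fixed $\lambda > 0$ and $n \geq 1$.

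No real obstacle arises: the explicit formula makes both the dominant term and the crude binomial estimates transparent, and the assumption $\lambda > 0$ is exactly what is needed for the lower bound so that the retained $m=n$ term does not vanish.
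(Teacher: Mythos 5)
Your proof is correct and takes essentially the same route as the paper: both read the asymptotics off the explicit formula of \cref{thm:polynomial}, identify the $m=n$ binomial coefficient as the dominant term, and conclude $\log\binom{n+k-2}{k-2}\asymp n\log k$. You simply spell out the two directions (bounding all coefficients by the largest for the upper bound, retaining only the $m=n$ summand for the lower bound) that the paper's terser argument leaves implicit.
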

\begin{proof}
By \cref{thm:polynomial}, for fixed $n$ and $\lambda$, the leading term should be the largest term of $\{\binom{m+k-2}{k-2}: 0 \leq m \leq n\}$, which is when $m=n$. To see this, note that $\binom{m+k-2}{k-2} = (k-1) \times k \times \dots \times (k+m-2) / m!$. It then follows from $\log \binom{n+k-2}{k-2} \asymp n \log k$. 
\end{proof}

The following shows that, as $k \rightarrow \infty$, the logarithmic dependence on $k$ for an upper bound on the logarithm of MGF is also necessary. 
\begin{proposition}
Suppose $H_{k,n}(\lambda) \geq \varphi_{k,n}(\lambda; p)$ for all $p$ and all $\lambda \in (0,1)$. For fixed $0 < \lambda \leq 1$ and $n \geq 1$, we have lower bound $\log H_{k,n}(\lambda) \gtrsim \lambda n \log k$ as $k \rightarrow \infty$. 
\end{proposition}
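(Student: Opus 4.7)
The plan is to lower bound $\varphi_{k,n}(\lambda; p)$ for a well-chosen $p$ and pass the bound to $H_{k,n}(\lambda)$ via the hypothesis. The natural candidate is the uniform distribution $p^\star = (1/k, \ldots, 1/k)$, since it spreads mass thinly and thus makes typical empirical measures far from $p^\star$ in relative entropy. Assume $k \geq n$, which is eventually true as $k \to \infty$ with $n$ fixed.

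Next I would restrict the sum defining $\varphi_{k,n}(\lambda; p^\star)$ to the class of outcomes in which $n$ of the $k$ categories each receive exactly one observation. There are $\binom{k}{n}$ such outcomes; each has multinomial coefficient $n!$ and probability $n!/k^n$ under $p^\star$. The empirical measure for any such outcome places $1/n$ on the $n$ hit categories, giving
\begin{equation*}
\D(\hat p \| p^\star) = \sum_{\text{hit } i} \frac{1}{n} \log \frac{1/n}{1/k} = \log(k/n).
\end{equation*}
Retaining only these outcomes in the sum yields
\begin{equation*}
\varphi_{k,n}(\lambda; p^\star) \;\geq\; \binom{k}{n}\, \frac{n!}{k^n}\, \exp\!\bigl(\lambda n \log(k/n)\bigr) \;=\; \frac{k(k-1)\cdots(k-n+1)}{k^n}\, \left(\frac{k}{n}\right)^{\lambda n}.
\end{equation*}
Taking logarithms and using that $k(k-1)\cdots(k-n+1)/k^n \to 1$ as $k \to \infty$ (for fixed $n$), we get
\begin{equation*}
\log \varphi_{k,n}(\lambda; p^\star) \;\geq\; \lambda n \log k - \lambda n \log n + o(1).
\end{equation*}

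By the hypothesis $H_{k,n}(\lambda) \geq \varphi_{k,n}(\lambda; p^\star)$ for $\lambda \in (0,1)$, this immediately gives $\log H_{k,n}(\lambda) \gtrsim \lambda n \log k$ as $k \to \infty$, for any fixed $\lambda \in (0,1)$. The boundary case $\lambda = 1$ follows from monotonicity of $\varphi_{k,n}(\cdot; p)$ in $\lambda$ (it is an MGF of a non-negative random variable) together with taking $\lambda \uparrow 1$ along the interior, or else by noting that the same explicit computation above is valid at $\lambda = 1$ and that $H_{k,n}(1) \geq \sup_{\lambda < 1} \varphi_{k,n}(\lambda; p^\star)$.

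I do not expect a serious obstacle here; the crux is simply guessing the right $p$ and the right subset of outcomes. Using $p^\star$ uniform makes the balance work out cleanly because every "one-per-bucket" outcome has the same KL value $\log(k/n)$, and the number of such outcomes is $\binom{k}{n} \asymp k^n/n!$, which exactly cancels the $k^{-n}$ from the multinomial probabilities and leaves the desired $k^{\lambda n}$ growth from the exponential weight.
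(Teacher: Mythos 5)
Your proposal is correct and follows essentially the same route as the paper: choose the uniform $p=(1/k,\dots,1/k)$, restrict the MGF sum to the $\binom{k}{n}$ configurations with $n$ categories hit exactly once, and observe that $\binom{k}{n}\,n!/k^n \to 1$ while the exponential weight contributes $(k/n)^{\lambda n}\gtrsim k^{\lambda n}$. Your extra care with the boundary case $\lambda=1$ (which the hypothesis, as stated, technically excludes) is a reasonable addition the paper glosses over.
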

\begin{proof}
Let $p = (1/k, \dots, 1/k)$. It follows from \cref{eqs:varphi} that 
\begin{equation*}
\varphi_{k,n}(\lambda, p) = n^{-\lambda n} \sum_{X_1, \dots, X_k} \binom{n}{X_1, \dots, X_k} \prod_{i=1}^{k} \left(\frac{X_i^{\lambda}}{k^{1-\lambda}}\right)^{X_i}.
\end{equation*}
We claim that $\varphi_{k,n}(\lambda, p) \asymp k^{\lambda n}$. Consider the configurations of $(X_1, \dots, X_k)$ such that $n$ of them are one and the rest are zero. As $k \rightarrow \infty$, the sum over these configurations becomes
\begin{equation*}
n^{-\lambda n} \binom{k}{n} n! \left( \frac{1}{k^{1-\lambda}} \right)^{n} \gtrsim k^{\lambda n}.
\end{equation*}
\end{proof}

\begin{remark}
\citet{agrawal2019concentration} uses the upper bound $G_{2,\infty}(\lambda)$ on $G_{2,n}(\lambda)$ to further bound $G_{k,n}(\lambda)$ for $k > 2$, by appealing to the chain rule of relative entropy \citep[\S 2.5]{cover2012elements}. This leads to the following bound:
\begin{equation}
\varphi_{k,n}(\lambda) \leq (1 - \lambda)^{-(k-1)} = G_{k,\infty}(\lambda) \quad (0 \leq \lambda < 1).
\end{equation}
However, observe that for fixed $n$ and large $k$, the logarithm of the above bound above grows \emph{linearly} in $k$. In contrast, as we have shown via a direct approach, the bound $\log G_{k,n}(\lambda)$ has the right logarithmic dependence.
\end{remark}

\section{Chernoff bound}
To highlight the dependence on $(k,n)$, let $\hat{p}_{k,n}$ denote the empirical probability vector under $k$ categories and $n$ samples. For any $\lambda \in [0,1]$, we have
\begin{equation} \label{eqs:lambda}
\P\left(n \D(\hat{p}_{k,n} \| p) > t\right) \leq \exp(-\lambda t) G_{k,n}(\lambda).
\end{equation}

\subsection{Exact bound} 
Minimizing the above over $\lambda \in [0,1]$ yields the tightest bound. 
\begin{theorem} \label{thm:chernoff}
For $k \geq 2$, $n \geq 1$, let $\hat{p}_{k,n}$ be the empirical probability vector from $\mult(p, n)$ for $p \in \Delta^{k-1}$. For $t > 0$, it holds that 
\begin{equation} \label{eqs:chernoff}
\P\left( n \D(\hat{p}_{k,n} \| p) > t \right) \leq \min_{\lambda \in [0,1]} \exp(-\lambda t) G_{k,n}(\lambda).
\end{equation}
\end{theorem}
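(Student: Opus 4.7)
The plan is to observe that this theorem is essentially a corollary of the machinery already developed in the preceding sections, so the proof amounts to stringing together three ingredients: Chernoff's inequality, the Jensen-type bound $\varphi_{k,n}(\lambda,p) \leq G_{k,n}(\lambda,p)$, and \cref{prop:p-independent}.

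First, I would fix $\lambda \in [0,1]$ and apply Markov's inequality to the nonnegative random variable $\exp(\lambda n \D(\hat{p}_{k,n}\|p))$. This yields the standard Chernoff inequality
\begin{equation*}
\P\bigl(n \D(\hat{p}_{k,n}\|p) > t\bigr) \leq \exp(-\lambda t)\, \E\exp\bigl(\lambda n \D(\hat{p}_{k,n}\|p)\bigr) = \exp(-\lambda t)\, \varphi_{k,n}(\lambda,p).
\end{equation*}
At this point I would recall the bound $\varphi_{k,n}(\lambda,p)\leq G_{k,n}(\lambda,p)$ derived immediately before \cref{def:G}. To justify it cleanly, I would apply weighted AM--GM termwise: for each $i$, $\hat{p}_i^{\lambda} p_i^{1-\lambda} \leq \lambda \hat{p}_i + (1-\lambda) p_i$, so raising to the $X_i$ power and multiplying across $i$ gives
\begin{equation*}
\Bigl\{\prod_{i=1}^{k}\hat{p}_i^{X_i}\Bigr\}^{\lambda}\Bigl\{\prod_{i=1}^{k} p_i^{X_i}\Bigr\}^{1-\lambda} \leq \prod_{i=1}^{k}\bigl[\lambda X_i/n + (1-\lambda) p_i\bigr]^{X_i},
\end{equation*}
and summing against the multinomial coefficients yields $\varphi_{k,n}(\lambda,p)\leq G_{k,n}(\lambda,p)$ from \cref{eqs:varphi} and \cref{eqs:G-kn}.

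Next, I would invoke \cref{prop:p-independent} to replace $G_{k,n}(\lambda,p)$ with the $p$-free polynomial $G_{k,n}(\lambda)$, producing the uniform-in-$p$ bound $\P(n\D(\hat{p}_{k,n}\|p)>t)\leq \exp(-\lambda t)\,G_{k,n}(\lambda)$. Since this inequality holds for every $\lambda\in[0,1]$, taking the infimum (which is attained, as $G_{k,n}$ is a polynomial and the interval is compact) gives the claim. The existence of a minimizer makes ``$\min$'' rather than ``$\inf$'' legitimate in the statement.

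I do not expect any real obstacle: every nontrivial ingredient has already been established. The only small care point is handling the boundary cases $\lambda=0$ (giving the trivial bound $1$) and $\lambda=1$ (where the AM--GM step is still valid with equality), so that the minimization is genuinely over the closed interval $[0,1]$ and there is no degeneracy at the endpoints.
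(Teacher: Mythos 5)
Your proposal is correct and takes essentially the same route as the paper: a pointwise-in-$\lambda$ Chernoff/Markov step, the bound $\varphi_{k,n}(\lambda,p)\leq G_{k,n}(\lambda,p)$ (your termwise weighted AM--GM is exactly the paper's Jensen/log-concavity inequality written coordinatewise), then \cref{prop:p-independent} and minimization over the compact interval $[0,1]$, where the minimum is attained because $\lambda\mapsto e^{-\lambda t}G_{k,n}(\lambda)$ is continuous. No gaps.
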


\begin{proposition} \label{prop:meaningful}
The bound in \cref{thm:chernoff} is meaningful ($\text{RHS} < 1$) if $t > \min(\log G_{k,n}(1), k-1)$. 
\end{proposition}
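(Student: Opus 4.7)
The plan is to show that the function $f(\lambda) := \exp(-\lambda t)\, G_{k,n}(\lambda)$ takes a value strictly below $1$ at some point of $[0,1]$, so that its minimum over $[0,1]$—which is exactly the RHS of \cref{eqs:chernoff}—is strictly less than $1$. The hypothesis $t > \min(\log G_{k,n}(1),\, k-1)$ is a disjunction, so I would produce a witness $\lambda$ separately in each of the two cases and then invoke whichever applies.

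If $t > \log G_{k,n}(1)$, the obvious witness is $\lambda = 1$: in this case
$f(1) = \exp(-t)\, G_{k,n}(1) < \exp(-\log G_{k,n}(1))\, G_{k,n}(1) = 1$.
If instead $t > k-1$, I would choose a small $\lambda > 0$. Observe that $f(0) = 1$ and, since $f$ is a polynomial in $\lambda$ multiplied by an exponential,
$f'(0) = G_{k,n}'(0) - t$. Reading off the coefficient of $\lambda^1$ from the explicit formula \cref{eqs:polynomial} gives $G_{k,n}'(0) = \frac{n!}{n\,(n-1)!}\binom{k-1}{k-2} = k-1$, so $f'(0) = (k-1) - t < 0$ under the assumption. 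By continuity of $f'$, the function $f$ is strictly decreasing on some right neighborhood of $0$, whence $f(\lambda) < f(0) = 1$ for all sufficiently small $\lambda > 0$.

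Since at least one of the two conditions in the disjunction holds whenever $t$ exceeds their minimum, at least one of the two witnesses above always applies, proving $\min_{\lambda \in [0,1]} f(\lambda) < 1$. There is essentially no obstacle here: the argument is a short boundary/derivative analysis of $f$ at the endpoints $\lambda = 0$ and $\lambda = 1$, with both pieces supplied cleanly by the closed form for $G_{k,n}$ from \cref{thm:polynomial}.
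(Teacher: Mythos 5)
Your proof is correct, and the second case is handled by a genuinely different (and more elementary) argument than the paper's. For $t > \log G_{k,n}(1)$ you and the paper both evaluate at $\lambda = 1$. For $t > k-1$, the paper instead bounds $\min_\lambda e^{-\lambda t}G_{k,n}(\lambda) \leq \min_\lambda e^{-\lambda t}G_{k,\infty}(\lambda)$ using the monotone convergence $G_{k,n}(\lambda)\nearrow (1-\lambda)^{-(k-1)}$ from \cref{prop:large-n-limit}, computes the limiting minimum in closed form as $\exp(k-1-t)\bigl(\tfrac{t}{k-1}\bigr)^{k-1}$, and checks it is below $1$ via $\log(1+x)<x$. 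You avoid all of that machinery by a local analysis at $\lambda=0$: since $G_{k,n}(0)=1$ and, from \cref{thm:polynomial}, $G_{k,n}'(0)=\tfrac{n!}{n(n-1)!}\binom{k-1}{k-2}=k-1$, one gets $f'(0)=(k-1)-t<0$, so $f$ dips strictly below $f(0)=1$ on a right neighborhood of $0$. This is self-contained (it needs only the linear coefficient of the polynomial, not \cref{lem:monotone} or \cref{prop:large-n-limit}) and is arguably the cleaner route to the qualitative claim. What the paper's route buys in exchange is an explicit quantitative upper bound $\psi_{k,\infty}(t)$ valid for all $n$, whereas your derivative argument only certifies strict inequality without saying by how much. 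Both establish the proposition as stated.
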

\begin{proof}
Let $f_{k,n}(\lambda, t) := \exp(-\lambda t) G_{k,n}(\lambda)$. Let $\psi_{k,n}(t) := \min_{\lambda \in [0,1]} f(\lambda, t)$ be the RHS of \cref{eqs:chernoff}. First, suppose $t > \min(\log G_{k,n}(1), k-1)$ and we show that $\psi_{k,n}(t)<1$. Clearly, either $t > \log G_{k,n}(1)$ or $t > k-1$. If $t > \log G_{k,n}(1)$, then $\psi_{k,n}(t) \leq f_{k,n}(1, t) = \exp(-t) G_{k,n}(1) < 1$. If $t > k-1$, $\psi_{k,n}(t) \leq \psi_{k, \infty}(t)$ by \cref{prop:large-n-limit}. One can show that 
\begin{equation*}
\psi_{k,\infty}(t) = \begin{cases} 0, &\quad t \leq k-1 \\
\exp(k-1-t) \left(\frac{t}{k-1}\right)^{k-1}, &\quad t > k-1 \end{cases}.
\end{equation*}
Writing $t = k - 1 + \delta$ for $\delta > 0$, it follows that 
\begin{equation*}
\begin{split}
\psi_{k,n}(t) \leq \psi_{k,\infty}(k-1 +\delta) &= \exp\left\{(k-1)\log(1 + \frac{\delta}{k-1}) - \delta \right\} < 1
\end{split}
\end{equation*}
by $\log(1+x) < x$ for $x>0$. 
\end{proof}

We have verified that the converse also holds at least for $k \leq 500$. 

Let $\lambda_{k,n}(t)$ be the minimizer in \cref{thm:chernoff}. Unfortunately, in general, $\lambda_{k,n}(t)$ does not permit a closed-form solution. In fact, finding $\lambda_{k,n}(t)$ is a non-convex problem and $\exp(-\lambda t) G_{k,n}(\lambda)$ can have more than one local minima on the unit interval. In the following, we develop a simple closed-form approximation to $\lambda_{k,n}(t)$ that leads to a bound that is only slightly looser than \cref{thm:chernoff}, when $n$ is relatively large compared to $k$.

\subsection{Large $n$ expansion of the minimizer}
By \cref{prop:large-n-limit}, when $n \rightarrow \infty$ we have
\begin{equation*}
\exp(-\lambda t) G_{k,n}(\lambda) \rightarrow \exp(-\lambda t) (1-\lambda)^{-(k-1)} = e^{-\lambda t -(k-1) \log (1-\lambda)}.
\end{equation*}
Note that $\lambda \mapsto -\lambda t -(k-1) \log (1-\lambda)$ is convex. The previous display is uniquely minimized at 
\begin{equation} \label{eqs:lambda-k-infty}
\lambda_{k,\infty}(t) = 1 - \frac{k-1}{t}, \quad \text{for $t>k-1$}.
\end{equation}
Plugging in $\lambda_{k,\infty}(t)$ into \cref{eqs:lambda} yields the following bound. 

\begin{corollary}[without correction]  \label{cor:ineq-uncorrected}
For $t>k-1$, it holds that
\begin{equation}
\P\left( n \D(\hat{p}_{k,n} \| p) > t \right) \leq e^{-t} e^{k-1} G_{k,n}\left(1 - \frac{k-1}{t} \right).
\end{equation}
\end{corollary}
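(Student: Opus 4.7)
The proof is essentially a direct substitution, so the plan is short. The starting point is the Chernoff inequality \cref{eqs:lambda}, which gives a valid upper bound for every choice of $\lambda \in [0,1]$. My strategy is simply to plug in the closed-form value $\lambda_{k,\infty}(t) = 1 - (k-1)/t$ identified in \cref{eqs:lambda-k-infty}, which was derived as the (unique) minimizer of the large-$n$ limiting bound $\exp(-\lambda t)(1-\lambda)^{-(k-1)}$.

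First I would check admissibility: for $t > k-1$, we have $0 < (k-1)/t < 1$, so $\lambda_{k,\infty}(t) \in (0,1) \subset [0,1]$, and thus this choice is a legal input to \cref{eqs:lambda}. Next I would carry out the elementary algebra for the exponential factor:
\begin{equation*}
\exp(-\lambda_{k,\infty}(t)\, t) = \exp\!\left(-\left(1 - \frac{k-1}{t}\right) t\right) = \exp(-t + (k-1)) = e^{-t}\, e^{k-1}.
\end{equation*}
Substituting into \cref{eqs:lambda} immediately yields the claimed inequality
\begin{equation*}
\P\!\left( n \D(\hat{p}_{k,n} \| p) > t \right) \leq e^{-t}\, e^{k-1}\, G_{k,n}\!\left(1 - \frac{k-1}{t}\right).
\end{equation*}

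There is no real obstacle here: this is a plug-and-chug step, and no optimality is being claimed for finite $n$. The point of the corollary is precisely that one trades a small amount of tightness (one is using the asymptotic minimizer rather than the true minimizer $\lambda_{k,n}(t)$) in exchange for a clean closed-form choice of $\lambda$, with the finite-$n$ polynomial $G_{k,n}$ still appearing in the final bound. Any remaining refinement of the bound (e.g., accounting for the factor-of-two gap in the shape parameter discussed after \cref{prop:large-n-limit}) would be the subject of a subsequent ``with correction'' version, not of this corollary.
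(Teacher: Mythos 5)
Your proof is correct and matches the paper's own derivation: the corollary is obtained exactly by substituting $\lambda_{k,\infty}(t) = 1 - (k-1)/t$ into \cref{eqs:lambda}, after noting this value lies in $(0,1)$ for $t > k-1$. The algebra for the exponential factor is right, and no further argument is needed.
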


$\lambda_{k,\infty}(t)$ is the zeroth-order large $n$ approximation to $\lambda_{k,n}(t)$. Yet, the bound can be significantly tightened by a further correction.

\begin{proposition} \label{prop:correction}
Suppose $k \geq 2$ and $t > k-1$. As $n \rightarrow \infty$, we have
\begin{equation}
\lambda_{k,n}(t) = \lambda_{k,\infty}(t) + \frac{k}{k-1} \frac{t - k+1}{n} + o(n^{-1}).
\end{equation}
\end{proposition}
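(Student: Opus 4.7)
The plan is to expand $\log G_{k,n}(\lambda)$ in powers of $1/n$, substitute into the first-order condition that defines $\lambda_{k,n}(t)$, and solve perturbatively around the limiting optimum $\lambda_* := \lambda_{k,\infty}(t) = 1 - (k-1)/t$.

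First I would use the closed form from \cref{thm:polynomial} to write
\begin{equation*}
G_{k,n}(\lambda) \;=\; \sum_{m=0}^{n} \binom{m+k-2}{k-2}\lambda^m \prod_{j=1}^{m-1}\!\left(1-\tfrac{j}{n}\right),
\end{equation*}
and expand each finite product as $1 - \tfrac{m(m-1)}{2n} + O(1/n^2)$, with the implicit constants growing only polynomially in $m$. Since $\sum_m \binom{m+k-2}{k-2} m^{r} \lambda^m < \infty$ for every fixed $r$ and every $\lambda$ in a compact subinterval of $[0,1)$, these expansions are uniform on such subintervals. Using the identity
\begin{equation*}
\sum_{m \geq 0}\binom{m+k-2}{k-2}\,m(m-1)\,\lambda^m \;=\; \lambda^2 \frac{\dd^2}{\dd\lambda^2}(1-\lambda)^{-(k-1)} \;=\; k(k-1)\lambda^2(1-\lambda)^{-(k+1)},
\end{equation*}
this gives
\begin{equation*}
G_{k,n}(\lambda) \;=\; (1-\lambda)^{-(k-1)} - \frac{k(k-1)\lambda^2}{2n}(1-\lambda)^{-(k+1)} + O(n^{-2}).
\end{equation*}
Taking logarithms and differentiating in $\lambda$ (termwise, by the same uniformity) yields
\begin{equation*}
\frac{\dd}{\dd\lambda}\log G_{k,n}(\lambda) \;=\; \frac{k-1}{1-\lambda} - \frac{k(k-1)\lambda}{n(1-\lambda)^3} + O(n^{-2}).
\end{equation*}

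Next I would exploit the first-order condition $\tfrac{\dd}{\dd\lambda}\log G_{k,n}(\lambda)\big|_{\lambda_{k,n}(t)} = t$. Viewed as the zero of a function of $(\lambda, 1/n)$ whose $\lambda$-derivative at $(\lambda_*, 0)$ equals $(k-1)/(1-\lambda_*)^2 = t^2/(k-1) > 0$, the implicit function theorem produces a smooth branch $\lambda_{k,n}(t) = \lambda_* + \delta_n$ with $\delta_n = O(1/n)$. Taylor-expanding the FOC about $\lambda_*$ and using $(k-1)/(1-\lambda_*) = t$ converts it to
\begin{equation*}
0 \;=\; \frac{t\,\delta_n}{1-\lambda_*} - \frac{k(k-1)\lambda_*}{n(1-\lambda_*)^3} + o(n^{-1}).
\end{equation*}
Solving for $\delta_n$ and substituting $1-\lambda_* = (k-1)/t$ and $\lambda_* = (t-k+1)/t$ simplifies, after cancellation, to $\delta_n = \tfrac{k}{k-1} \cdot \tfrac{t-k+1}{n} + o(n^{-1})$, as claimed.

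The delicate point is ensuring the $1/n$ expansion of $\log G_{k,n}$ and its $\lambda$-derivative are valid \emph{uniformly} on a fixed neighborhood of $\lambda_*$, so that the expanded FOC can be evaluated at the moving root $\lambda_{k,n}(t)$ rather than at a fixed $\lambda$. This is where the polynomial-in-$m$ growth of the coefficients arising from expanding $\prod_{j=1}^{m-1}(1-j/n)$ meets the geometric decay of $\lambda^m$ on any compact subinterval of $[0,1)$; the rest is bookkeeping.
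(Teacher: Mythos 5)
Your proof is correct in substance but reaches the key analytic input by a genuinely different route. The paper likewise reduces the problem to the first-order condition at an interior critical point near $\lambda_{k,\infty}$ and a one-step Newton correction, but it obtains the required $n^{-1}$ asymptotics of $\frac{k-1}{1-\lambda}G_{k,n}(\lambda)-G'_{k,n}(\lambda)$ (\cref{lem:limit-correction}) by rewriting $G_{2,n}$ in terms of the incomplete gamma function, invoking uniform asymptotics for $\Gamma(a,\gamma a)$, and lifting to general $k$ via the representation $G_{k,n}=\frac{1}{(k-2)!}\frac{\dd^{k-2}}{\dd\lambda^{k-2}}\bigl(\lambda^{k-2}G_{2,n}\bigr)$. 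You instead extract the expansion $G_{k,n}(\lambda)=(1-\lambda)^{-(k-1)}-\frac{k(k-1)\lambda^2}{2n}(1-\lambda)^{-(k+1)}+O(n^{-2})$ directly from the coefficients in \cref{thm:polynomial} by expanding $\prod_{j=1}^{m-1}(1-j/n)$; this is more elementary and avoids special-function asymptotics entirely, and your log-derivative expansion and the solution of the perturbed first-order condition agree exactly with the paper's computation.

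Two points need tightening, though neither is a fatal gap. First, the claim that $\prod_{j=1}^{m-1}(1-j/n)=1-\frac{m(m-1)}{2n}+O(n^{-2})$ with implicit constants growing only polynomially in $m$ fails for $m$ comparable to $n$ (there the product is exponentially small, not close to $1$); the fix is to split the sum at $m=\lfloor\sqrt{n}\rfloor$, apply the expansion below the cut where $m^2/n\to 0$, and bound the tail by $\sum_{m>\sqrt{n}}\binom{m+k-2}{k-2}\lambda^m=o(n^{-2})$ using exactly the geometric decay you invoke. Second, the implicit function theorem hands you a root of the first-order condition near $\lambda_*$, but $\lambda_{k,n}(t)$ is defined as the \emph{global} minimizer of a possibly non-convex function on $[0,1]$; you must separately argue that this minimizer is eventually interior and converges to $\lambda_*$ (e.g., from the monotone convergence $f_{k,n}\nearrow f_{k,\infty}$, uniform on compact subsets of $[0,1)$, together with \cref{prop:meaningful} to exclude $\lambda_{k,n}=0$ and $\lambda_{k,n}\to 1$) before identifying it with the branch produced by the implicit function theorem. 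The paper's proof makes essentially this latter argument and is equally brief about it.
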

\begin{proof}
Fix $k \geq 2$ and $t > k-1$. Let $f_{k,n}:= \exp(-\lambda t) G_{k,n}(\lambda)$. First, we claim that there exists $N(k,t)$ such that $f_{k,n}'(\lambda_{k,n}) = 0$ for $n \geq N(k,t)$ at the minimizer $\lambda_{k,n}$. To see this, note that asymptotically $\lambda_{k,n}$ cannot be 0 or 1. In particular, (i) $\lambda_{k,n} = 0$ would imply $\text{RHS}=1$ for \cref{eqs:chernoff}, and (ii) $\lambda_{k,n} \rightarrow 1$ would imply $\text{RHS} \rightarrow \infty$ for \cref{eqs:chernoff} --- both contradict \cref{prop:meaningful}. Given 
\begin{equation*}
f'_{k,n}(\lambda_{k,n}) = f'_{k,n}(\lambda_{k,\infty}) + f''_{k,n}(\lambda_{k,\infty})(\lambda_{k,n} - \lambda_{k,\infty}) + o(|\lambda_{k,n} - \lambda_{k,\infty}|),
\end{equation*}
it follows that
\begin{equation*}
\lambda_{k,n} = \lambda_{k,\infty} - \frac{f'_{k,n}(\lambda_{k,\infty})}{f''_{k,n}(\lambda_{k,\infty})} + o(|\lambda_{k,n} - \lambda_{k,\infty}|).
\end{equation*}
Since $f_{k,n} \rightarrow f_{k,\infty} = \exp(-\lambda t) G_{k,\infty}(\lambda)$, it is easy to check that
\begin{equation*}
\begin{split}
f''_{k,n}(\lambda_{k,\infty}) &= f''_{k,\infty}(\lambda_{k,\infty}) + o(1) \\
&= (k-1) e^{-\lambda_{k,\infty} t} (1-\lambda_{k,\infty})^{-(k+1)} + o(1),
\end{split}
\end{equation*}
where the limit $(k-1) e^{-\lambda_{k,\infty} t} (1-\lambda_{k,\infty})^{-(k+1)}$ is non-zero and finite. Meanwhile, we have
\begin{equation*}
f'_{k,n}(\lambda_{k,\infty}) = e^{-\lambda_{k,\infty} t} \left(G'_{k,n}(\lambda_{k,\infty}) - t G_{k,n}(\lambda_{k,\infty}) \right).
\end{equation*}
Using $\lambda_{k,\infty} = 1 - (k-1)/t$, it follows that
\begin{equation*}
\lambda_{k,n} = \lambda_{k,\infty} + \frac{e^{-(t-k+1)} \left[\frac{k-1}{1-\lambda_{k,\infty}} G_{k,n}(\lambda_{k,\infty}) - G'_{k,n}(\lambda_{k,\infty})\right]}{(k-1) e^{-(t-k+1)} \left(\frac{k-1}{t} \right)^{-(k+1)} + o(1)} + o(|\lambda_{k,n} - \lambda_{k,\infty}|).
\end{equation*}
It is easy to check that the proof is complete given the following lemma.
\end{proof}

\begin{lemma} \label{lem:limit-correction}
For $k \geq 2$ and $\lambda \in (0,1)$, it holds that
\begin{equation} \label{eqs:limit-correction}
n \left( \frac{k-1}{1-\lambda} G_{k,n}(\lambda) - G'_{k,n}(\lambda) \right) \rightarrow \frac{k(k-1)\lambda}{(1-\lambda)^{k+2}}, \quad \text{as $n \rightarrow \infty$}.
\end{equation}
\end{lemma}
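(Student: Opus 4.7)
The driver behind the result is that the limit $G_{k,\infty}(\lambda) = (1-\lambda)^{-(k-1)}$ satisfies the first-order linear ODE $(1-\lambda)G_{k,\infty}'(\lambda) = (k-1)G_{k,\infty}(\lambda)$, so the bracketed expression in \cref{eqs:limit-correction} vanishes identically when $G_{k,n}$ is replaced by $G_{k,\infty}$. The plan is to introduce $\Delta_n(\lambda) := G_{k,\infty}(\lambda) - G_{k,n}(\lambda)$ and use this cancellation to rewrite the quantity of interest as
\begin{equation*}
\frac{k-1}{1-\lambda}G_{k,n}(\lambda) - G_{k,n}'(\lambda) = \Delta_n'(\lambda) - \frac{k-1}{1-\lambda}\Delta_n(\lambda),
\end{equation*}
thereby reducing the claim to first-order asymptotics of $n\Delta_n$ and $n\Delta_n'$.

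Next I would use the explicit formula from \cref{thm:polynomial} to split
\begin{equation*}
\Delta_n(\lambda) = \sum_{m=0}^{n} \binom{m+k-2}{k-2}(1-a_{n,m})\lambda^m + \sum_{m=n+1}^{\infty} \binom{m+k-2}{k-2}\lambda^m,
\end{equation*}
where $a_{n,m}:=\prod_{j=0}^{m-1}(1-j/n)$. For fixed $\lambda\in(0,1)$, the polynomial growth of the binomial coefficients will make the tail exponentially small even after multiplication by $n$, so only the head sum matters asymptotically. For the head, Weierstrass's product bound $\prod_j(1-x_j)\ge 1-\sum_j x_j$ will yield the uniform estimate
\begin{equation*}
0 \le n(1-a_{n,m}) \le \frac{m(m-1)}{2}, \qquad 0\le m\le n,
\end{equation*}
together with the pointwise limit $n(1-a_{n,m})\to m(m-1)/2$. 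Since $\sum_{m\ge 0}\binom{m+k-2}{k-2} m(m-1)\lambda^m$ converges for $\lambda\in(0,1)$, dominated convergence will deliver
\begin{equation*}
n\Delta_n(\lambda) \;\longrightarrow\; \frac{\lambda^2}{2}G_{k,\infty}''(\lambda) \;=\; \frac{k(k-1)\lambda^2}{2(1-\lambda)^{k+1}}.
\end{equation*}
An identical DCT argument applied term-by-term to $\Delta_n'$ — now with an extra factor of $m$ and domination by the convergent series $\sum m^3\binom{m+k-2}{k-2}\lambda^{m-1}$ — will give
\begin{equation*}
n\Delta_n'(\lambda) \;\longrightarrow\; \frac{\dd}{\dd \lambda}\!\left[\frac{\lambda^2}{2}G_{k,\infty}''(\lambda)\right] = \frac{k(k-1)\lambda\bigl[2+(k-1)\lambda\bigr]}{2(1-\lambda)^{k+2}}.
\end{equation*}

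Finally, substituting these two limits into the reduction above and putting everything over the common denominator $(1-\lambda)^{k+2}$ will cause the $(k-1)\lambda$ pieces to cancel, leaving precisely $k(k-1)\lambda/(1-\lambda)^{k+2}$. The only mildly technical step is justifying the uniform bound $n(1-a_{n,m})\le m(m-1)/2$ and verifying the DCT hypotheses; both follow in a few lines from the Weierstrass product inequality and the polynomial growth of $\binom{m+k-2}{k-2}$ in $m$. Everything else is algebra.
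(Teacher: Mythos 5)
Your proof is correct, but it takes a genuinely different route from the paper's. The paper goes through the incomplete gamma function: it rewrites $G_{2,n}(\lambda) = n^{-n}\lambda^n e^{n/\lambda}\Gamma(n+1,n/\lambda)$, invokes the DLMF asymptotic expansion of $\Gamma(a,\gamma a)$ to obtain $G_{2,n}(\lambda) = (1-\lambda)^{-1} - \lambda^2(1-\lambda)^{-3}n^{-1} + O(n^{-2})$, lifts this to general $k$ via the identity $G_{k,n} = \frac{1}{(k-2)!}\frac{\dd^{k-2}}{\dd\lambda^{k-2}}\bigl(\lambda^{k-2}G_{2,n}\bigr)$ of \cref{eqs:Gkn-deriv-form}, and then differentiates once more to assemble the limit. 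You instead exploit the fact that the target combination annihilates $G_{k,\infty}$, which reduces the claim to the first-order asymptotics of $n\Delta_n$ and $n\Delta_n'$; these you extract directly from the coefficients in \cref{thm:polynomial} via the Weierstrass bound $0\le n(1-a_{n,m})\le m(m-1)/2$ and dominated convergence. I checked the pieces: your intermediate limit $n\Delta_n(\lambda)\to\tfrac{\lambda^2}{2}G_{k,\infty}''(\lambda)=\tfrac{k(k-1)\lambda^2}{2(1-\lambda)^{k+1}}$ agrees exactly with the paper's expansion, your formula for $\lim n\Delta_n'$ is correct, and the final cancellation of the $(k-1)\lambda$ terms does leave $k(k-1)\lambda/(1-\lambda)^{k+2}$. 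What your approach buys is self-containedness (no special-function asymptotics) and a direct treatment of general $k$ that sidesteps differentiating an $O(n^{-2})$ asymptotic remainder $k-2$ times --- a step the paper carries out without explicit justification. What the paper's route buys is access to the full asymptotic series to arbitrary order. The only details worth writing out in your version are the termwise differentiation defining $\Delta_n'$ and the summable dominating series for it, both of which are routine as you indicate.
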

The proof relies on asymptotic expansions of the incomplete gamma function and is left to the Appendix.

\begin{remark}
The correction in \cref{prop:correction} can be viewed as a one-step Newton's iteration based on $\lambda_{k,\infty}(t)$.
\end{remark}

\begin{figure}[!htb]
\centering
\includegraphics[width=0.45\textwidth]{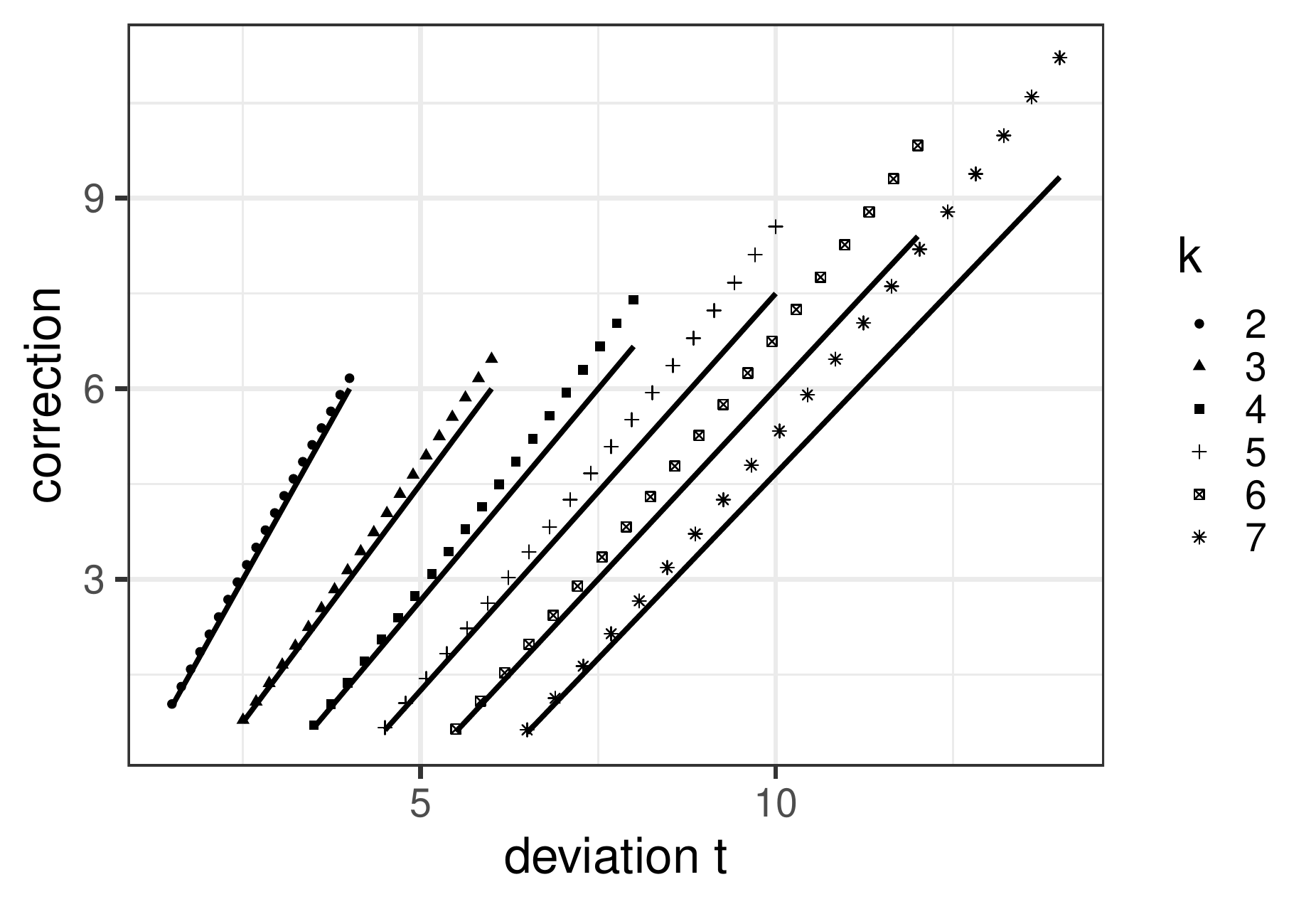}
\caption{The ideal correction $\lim_{n \rightarrow \infty} n(\lambda_{k,n}(t) - \lambda_{k,\infty}(t))$ (dots, fitted from numerical values) and the theoretical first-order correction  $k(t - k+1) / (k-1)$ (lines), both plotted against the deviation $t$.}
\label{fig:correction}
\end{figure}

In \cref{fig:correction}, we compare the correction term (the $n^{-1}$ term) from \cref{prop:correction} to the numerical values. The numerical value corresponding to a pair $(t, k)$ is obtained by numerically finding $\lambda_{k,n}(t)$ for a sequence of $n$ varying from $200$ to $2 \times 10^4$, then fitting $\log(\lambda_{k,n} - \lambda_{k,\infty})$ against $- \log n$ in least squares, and finally taking the intercept and exponentiating. 

Plugging the correction into \cref{eqs:lambda} yields the following bound. 

\begin{corollary}[with correction] \label{cor:bound-correction}
Let $\hat{\lambda}_{k,n} := \min\left\{1 - \frac{k-1}{t} + \frac{k}{k-1} \frac{t - k+1}{n}, 1 \right\}$. For $n \geq 1$, $k \geq 2$ and $t > k-1$, it holds that
\begin{equation}
\P\left( n \D(\hat{p}_{k,n} \| p) > t \right) \leq \exp(-\hat{\lambda}_{k,n} t) G_{k,n}(\hat{\lambda}_{k,n}).
\end{equation}
\end{corollary}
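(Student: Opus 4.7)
The plan is straightforward: this inequality is a direct instance of the one-parameter Chernoff bound \cref{eqs:lambda}, applied at the specific feasible value $\lambda = \hat{\lambda}_{k,n}$. The substantive mathematical work has already been done in \cref{prop:correction} and \cref{lem:limit-correction}, so this corollary is essentially a packaging step that converts the asymptotic expansion of the minimizer into a closed-form bound.

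The only computation required is to verify that $\hat{\lambda}_{k,n} \in [0,1]$, which is exactly the hypothesis needed for \cref{eqs:lambda} to apply. The upper bound $\hat{\lambda}_{k,n} \leq 1$ is immediate from the outer minimum with $1$ in the definition of $\hat{\lambda}_{k,n}$. For the lower bound, the hypothesis $t > k-1$ gives $1 - (k-1)/t > 0$, while the correction term $k(t-k+1)/((k-1)n)$ is also strictly positive for $k \geq 2$, $n \geq 1$ under this hypothesis. Their sum is therefore strictly positive, and taking the minimum with $1$ preserves non-negativity, so $\hat{\lambda}_{k,n} \in (0,1]$.

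The reason this deserves a separate statement is pragmatic rather than mathematical: $\hat{\lambda}_{k,n}$ has a closed form, so applying the bound avoids the numerical non-convex minimization required by \cref{thm:chernoff}. The justification that this particular $\hat{\lambda}_{k,n}$ is a sensible surrogate for the true optimizer $\lambda_{k,n}(t)$ was already established in \cref{prop:correction}, where $\hat{\lambda}_{k,n}$ was shown to match $\lambda_{k,n}(t)$ up to $o(n^{-1})$ as $n \to \infty$. I anticipate no real obstacle: once feasibility is noted, the desired inequality follows from a single substitution into \cref{eqs:lambda}.
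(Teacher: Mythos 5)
Your proposal is correct and matches the paper's own (implicit) argument: the paper obtains this corollary by simply plugging $\hat{\lambda}_{k,n}$ into the generic Chernoff inequality \cref{eqs:lambda}, and your feasibility check that $t>k-1$ forces $\hat{\lambda}_{k,n}\in(0,1]$ is exactly the only detail that needs verifying.
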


\section{Discussion} In this section, we discuss the behavior of our bound and compare to bounds previously proposed in the literature. 

\subsection{Comparison} 
We briefly compare the bounds for several sample sizes under $k=6$ in \Cref{fig:probs}; see \cref{fig:probs-ii} in the Appendix for $k=20$. First, our bound is always tighter than \citet{agrawal2019concentration}, since \citet{agrawal2019concentration} uses Chernoff bound based on $G_{k,\infty}$, which upper-bounds $G_{k,n}$. Second, in the settings plotted, our bound is tighter than that of \citet{mardia2018concentration} for $t$ smaller than some $T_{k,n}$ and vice versa for $t > T_{k,n}$ --- an explanation for this phenomenon is provided in the following section. Third, the closed-form correction-based bound is significantly tighter than the bound without correction, and is in fact very close to the exact bound, with the difference between the two only noticeable when both $n$ and $t$ are small.

\begin{figure*}[!ht]
\centering
\includegraphics[width=.9\textwidth]{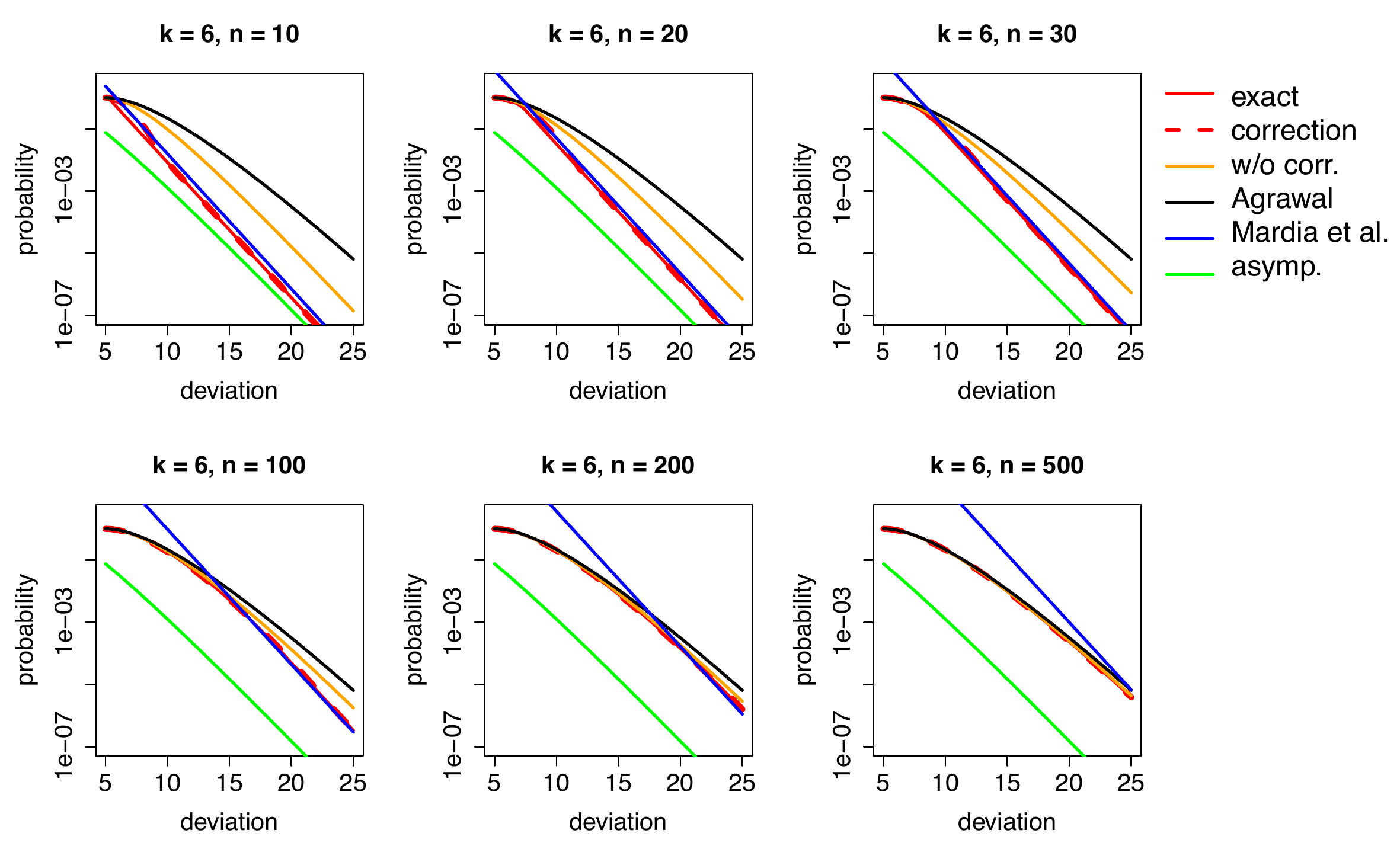}
\caption{Comparison of probability bounds on $\P(n \D(\hat{p}_{k,n} \| p) > t)$ for $k=6$ and $t > \min(\log G_{k,n}(1), k-1)$. The $y$-axis is in logarithmic scale. The methods compared include: ``exact'' (\cref{thm:chernoff} from numerical minimization), ``correction'' (\cref{cor:bound-correction}), ``w/o corr.'' (\cref{cor:ineq-uncorrected}), \citet[Theorem 1.2]{agrawal2019concentration}, \citet[Theorem 3]{mardia2018concentration}, and the asymptotic bound that is the exact probability when $n \rightarrow \infty$. Note that ``asymp.'' might not be a valid bound and is for reference only.}
\label{fig:probs}
\end{figure*}

\subsection{Combinatorial scaling}
Recently \citet{mardia2018concentration} consider a bound of the form
\begin{equation} \label{eqs:combinatorial-ineq}
\P\left(n \D(\hat{p}_{k,n} \| p) > t \right) \leq C(k,n) \exp(-t),
\end{equation}
where $C(k,n)$ captures the combinatorial dependence on $k$ and $n$. This is motivated by the classic method-of-types inequality \cref{eqs:MoT}, which holds with
\begin{equation*}
C_{\mathrm{T}}(k,n) = \binom{n + k - 1}{k - 1}.
\end{equation*}
Note that $C_{\mathrm{T}}(k,n)$ is the number of ways that $\{1,\dots,n\}$ can be partitioned into $k$ groups, and hence counts the ``types'' of possible empirical distributions. 
\citet{mardia2018concentration} showed that $C_{\mathrm{T}}(k,n)$ can be improved to 
\begin{equation*}
C_{\mathrm{M}}(k, n) = \frac{12}{\pi} \sum_{i=0}^{k-2} K_{i-1} \left(\frac{e \sqrt{n}}{2 \pi} \right)^i,
\end{equation*}
where 
\begin{equation*}
K_i = \begin{cases} \frac{\pi (2\pi)^{i/2}}{2 \times 4 \times \dots \times i} & \  \text{($i$ is even)} \\
\frac{(2\pi)^{(i+1)/2}}{1 \times 3 \times \dots \times i} & \  \text{($i$ is odd)}
\end{cases}, \quad K_{-1} = 1
\end{equation*}
are constants. It can be shown that $C_{\mathrm{M}}(k, n)$ is smaller than $C_{\mathrm{T}}(k,n)$ for all $k, n \geq 2$ \citep[\S1.2]{mardia2018concentration}

Since the choice of $\lambda$ that tightens our bound depends on $t$, the bounds presented in the previous section do not take the form of \cref{eqs:combinatorial-ineq}. For comparison, we use the following bound from setting $\lambda = 1$ in \cref{eqs:lambda}, which is not the tightest bound except for very large $t$. 
\begin{corollary} \label{cor:combinatorial}
For $n \geq 1$, $k \geq 2$ and $t > 0$, it holds that 
\begin{equation*}
\P\left(n \D(\hat{p}_{k,n} \| p) > t \right) \leq G_{k,n}(1) \exp(-t).
\end{equation*}
\end{corollary}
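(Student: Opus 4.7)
The plan is to note that \cref{cor:combinatorial} is nothing more than the master Chernoff-type inequality \cref{eqs:lambda} evaluated at the endpoint $\lambda = 1$. Since \cref{eqs:lambda} is already stated and justified for every $\lambda \in [0,1]$, setting $\lambda=1$ gives the claim in a single line. The only (minor) thing worth checking is that each ingredient feeding into \cref{eqs:lambda} remains valid at the boundary $\lambda = 1$, rather than only in the open interval.

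To make this explicit, I would assemble the three ingredients used earlier in the paper. First, Markov's inequality applied to the non-negative random variable $\exp(\lambda n \D(\hat{p}_{k,n} \| p))$ yields
\begin{equation*}
\P\left(n \D(\hat{p}_{k,n} \| p) > t\right) \leq e^{-\lambda t}\,\varphi_{k,n}(\lambda,p), \quad \lambda \geq 0, \ t > 0,
\end{equation*}
which is unconditionally valid and in particular holds at $\lambda=1$. Second, the Jensen-based bound $\varphi_{k,n}(\lambda,p) \leq G_{k,n}(\lambda,p)$ established just after \cref{def:G} follows from log-concavity of the multinomial probability in $p$ and is stated for all $\lambda\in[0,1]$, so it likewise covers the endpoint $\lambda=1$. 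Third, by \cref{prop:p-independent}, $G_{k,n}(\lambda,p)=G_{k,n}(\lambda)$, eliminating the dependence on the unknown $p$. Chaining these three facts at $\lambda=1$ delivers $\P(n\D(\hat{p}_{k,n}\|p) > t) \leq e^{-t} G_{k,n}(1)$. There is no genuine obstacle here; the corollary is recorded precisely so that the bound can be placed in the combinatorial form \cref{eqs:combinatorial-ineq} with $C(k,n) = G_{k,n}(1)$ and then compared against $C_{\mathrm{T}}(k,n)$ and $C_{\mathrm{M}}(k,n)$ in the subsequent discussion.
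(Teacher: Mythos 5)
Your proposal is correct and matches the paper exactly: the paper obtains \cref{cor:combinatorial} by simply setting $\lambda=1$ in \cref{eqs:lambda}, which is precisely what you do. Your extra check that Markov's inequality, the Jensen step $\varphi_{k,n}(\lambda,p)\leq G_{k,n}(\lambda,p)$, and \cref{prop:p-independent} all remain valid at the endpoint $\lambda=1$ is sound (indeed the Jensen step is an equality there) but adds nothing beyond the paper's one-line derivation.
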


Like $C_{\mathrm{M}}(k,n)$ the resulting combinatorial factor $G_{k,n}(1)$ is also uniformly smaller than the method-of-types combinatorial factor $C_{\mathrm{T}}(k,n)$. 
\begin{proposition}
For $k \geq 2$, $n \geq 1$, $G_{k,n}(1) < C_{\mathrm{T}}(k,n)$. 
\end{proposition}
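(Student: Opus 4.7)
The plan is a direct term-by-term comparison made possible by the explicit formula in \cref{thm:polynomial}. Setting $\lambda = 1$ gives
\begin{equation*}
G_{k,n}(1) = \sum_{m=0}^{n} \frac{n!}{n^m(n-m)!}\binom{m+k-2}{k-2}.
\end{equation*}
On the other side, I would invoke the hockey-stick identity to rewrite the method-of-types factor as a sum of the same binomial coefficients:
\begin{equation*}
C_{\mathrm{T}}(k,n) = \binom{n+k-1}{k-1} = \sum_{m=0}^{n}\binom{m+k-2}{k-2}.
\end{equation*}
So both quantities are sums of the \emph{same} positive terms, differing only by the weight $w_{n,m} := \frac{n!}{n^m(n-m)!}$ appearing in $G_{k,n}(1)$.

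Next, I would write the weight in telescoped form
\begin{equation*}
w_{n,m} \;=\; \prod_{j=0}^{m-1}\left(1 - \frac{j}{n}\right),
\end{equation*}
from which $0 < w_{n,m} \leq 1$ is immediate, with equality precisely when $m \in \{0,1\}$ and strict inequality $w_{n,m} < 1$ for every $m \geq 2$ (provided $n \geq 2$). Since $\binom{m+k-2}{k-2} \geq 1$ for all $m \geq 0$ whenever $k \geq 2$, the termwise bound $w_{n,m}\binom{m+k-2}{k-2} \leq \binom{m+k-2}{k-2}$ holds for every $m$, and is strict for each $m \geq 2$. Summing yields the inequality, with strictness coming from the presence of at least one $m \geq 2$ term, i.e., as soon as $n \geq 2$.

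The main ``obstacle'' is really just bookkeeping: \cref{thm:polynomial} and the hockey-stick identity do all the work, and everything reduces to observing that a finite product of numbers in $(0,1]$ with at least one factor strictly below $1$ is itself strictly below $1$. The only subtle point worth flagging is the boundary case $n = 1$: here the sum collapses to the $m = 0$ and $m = 1$ terms, both carrying weight $1$, so $G_{k,1}(1) = k = \binom{k}{k-1} = C_{\mathrm{T}}(k,1)$, giving equality rather than strict inequality. Hence the proof delivers the strict inequality for all $n \geq 2$ (and equality at $n = 1$), which is the substantive content behind the proposition.
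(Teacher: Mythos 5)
Your proof is correct and follows essentially the same route as the paper's: evaluate \cref{thm:polynomial} at $\lambda=1$, observe that the weights $n!/(n^m(n-m)!)=\prod_{j=0}^{m-1}(1-j/n)$ lie in $(0,1]$, and collapse $\sum_{m=0}^{n}\binom{m+k-2}{k-2}$ to $\binom{n+k-1}{k-1}$ via parallel summation. Your boundary observation is moreover a genuine (minor) correction to the paper: at $n=1$ only the $m\in\{0,1\}$ terms appear, both with weight $1$, so $G_{k,1}(1)=k=\binom{k}{k-1}=C_{\mathrm{T}}(k,1)$ and the strict inequality asserted in the proposition (and in the paper's own chain of inequalities) fails; the statement should either require $n\geq 2$ or be weakened to $\leq$ at $n=1$.
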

\begin{proof}
By \cref{thm:polynomial}, 
\begin{equation*}
\begin{split}
\quad G_{k,n}(1) &= \sum_{m=0}^{n} \frac{n \times (n-1) \times \dots \times (n-m+1)}{n^m} \binom{m+k-2}{k-2} \\
&< \sum_{m=0}^{n} \binom{m+k-2}{k-2} = \binom{n+k-1}{k-1},
\end{split}
\end{equation*}
where the last equality follows from the ``parallel summation'' \citep[Eq.~(5.9)]{gkp-book}.
\end{proof}

In fact, the improvement can be significant when $n$ is large.
\begin{proposition} \label{prop:sqrt-improvement}
For fixed $k \geq 2$, as $n \rightarrow \infty$, $\frac{\log G_{k,n}(1)}{\log C_{\mathrm{
T}}(k,n)} \rightarrow 1/2$.
\end{proposition}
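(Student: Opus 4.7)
The plan is to show that for fixed $k \geq 2$, $G_{k,n}(1) \asymp n^{(k-1)/2}$ while $C_{\mathrm{T}}(k,n) \asymp n^{k-1}$ as $n \to \infty$; after taking logarithms, the ratio tends to $\tfrac{(k-1)/2}{k-1} = 1/2$. The denominator is immediate: $C_{\mathrm{T}}(k,n) = \binom{n+k-1}{k-1}$ is a polynomial of degree $k-1$ in $n$, so $\log C_{\mathrm{T}}(k,n) = (k-1)\log n + O(1)$. The work is in bracketing the numerator.

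Starting from the closed form in \cref{thm:polynomial},
\begin{equation*}
G_{k,n}(1) = \sum_{m=0}^n \binom{m+k-2}{k-2} \prod_{i=1}^{m-1}\!\left(1 - \frac{i}{n}\right),
\end{equation*}
I would treat the falling-factorial prefactor as Gaussian in $m$. The bound $\prod_{i=1}^{m-1}(1-i/n) \leq \exp(-m(m-1)/(2n))$, obtained from $\log(1-x) \leq -x$, shows that the sum is concentrated at $m = \Theta(\sqrt{n})$. For the upper bound I would split at $M = \lceil \sqrt{n \log n}\,\rceil$: using the crude estimate $\binom{m+k-2}{k-2} \leq (m+k-2)^{k-2}/(k-2)!$, the head contributes at most $M \cdot (M+k)^{k-2}/(k-2)! = O\bigl((n\log n)^{(k-1)/2}\bigr)$, while the tail $m>M$ is negligible because $\exp(-m^2/(2n))$ decays super-polynomially there. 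For a matching lower bound, I would retain only the $\Theta(\sqrt{n})$ indices $m \in [\lceil\sqrt{n}\,\rceil,\,\lfloor 2\sqrt{n}\rfloor]$, on which $\prod_{i=1}^{m-1}(1-i/n) \geq e^{-2}$ for $n$ large (by a direct expansion of the logarithm) and $\binom{m+k-2}{k-2} \geq \binom{m}{k-2} \gtrsim n^{(k-2)/2}$. Multiplying gives $G_{k,n}(1) \gtrsim n^{(k-1)/2}$, so altogether $\log G_{k,n}(1) = \tfrac{k-1}{2}\log n + O(\log\log n)$.

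The only obstacle is cosmetic rather than conceptual: the upper bound picks up an $O(\log\log n)$ slack from the $\sqrt{n\log n}$ cutoff, which must simply be verified to be absorbed when dividing by $\log C_{\mathrm{T}}(k,n) \sim (k-1)\log n$. This slack can be eliminated entirely by a Laplace/Riemann-sum argument that approximates the sum by $\sqrt{n}\int_0^\infty \frac{(s\sqrt{n})^{k-2}}{(k-2)!} e^{-s^2/2}\,ds$, giving the sharper asymptotic $G_{k,n}(1) \sim \frac{2^{(k-3)/2}\Gamma((k-1)/2)}{(k-2)!}\, n^{(k-1)/2}$, but this level of precision is unnecessary for the logarithmic-ratio statement.
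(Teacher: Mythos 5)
Your argument is correct, and it takes a genuinely different route from the paper. The paper's proof goes through the incomplete gamma function: it uses the identity $G_{2,n}(1)=(e/n)^n\Gamma(n+1,n)$ from \cref{eqs:G-Gamma-form} together with the DLMF asymptotic $\Gamma(z,z)=z^{z-1}e^{-z}(\sqrt{\pi/2}\,z^{1/2}+O(1))$ to get $G_{2,n}(1)=\sqrt{\pi/2}\,n^{1/2}+O(1)$, and then asserts that ``a similar computation'' yields $G_{k,n}(1)\asymp n^{(k-1)/2}$ for $k\geq 3$ (presumably via \cref{eqs:Gkn-deriv-form}). Your direct bracketing of the sum $\sum_m \binom{m+k-2}{k-2}\prod_{i=1}^{m-1}(1-i/n)$ is elementary, self-contained, and treats all $k\geq 2$ uniformly, which is arguably a cleaner justification of the general-$k$ case than the paper's sketch; your closing Laplace asymptotic $G_{k,n}(1)\sim \frac{2^{(k-3)/2}\Gamma((k-1)/2)}{(k-2)!}n^{(k-1)/2}$ is consistent with the paper's $k=2$ constant $\sqrt{\pi/2}$. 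Two small points to tighten if you write this up. First, the phrase ``decays super-polynomially there'' is not literally true at the cutoff: at $m=M=\lceil\sqrt{n\log n}\rceil$ the Gaussian factor is only about $n^{-1/2}$, so the naive bound (largest term times number of terms) gives $n^{k-2}\cdot n\cdot n^{-1/2}$, which is not negligible. You need to actually sum the tail, e.g.\ by blocks $m\in[jM,(j+1)M]$ where the factor is at most $n^{-j^2/2}$, or by comparison with $n^{(k-1)/2}\int_{\sqrt{\log n}}^{\infty}s^{k-2}e^{-s^2/2}\,ds\to 0$; either way the tail is $o(n^{(k-1)/2})$, so the conclusion stands. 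Second, on $m\in[\sqrt{n},2\sqrt{n}]$ you have $\sum_{i<m}i/n\leq 2$ plus an $o(1)$ second-order correction, so the product is bounded below by $e^{-2-o(1)}$ rather than $e^{-2}$ exactly; any fixed constant such as $e^{-3}$ suffices. Neither issue affects the validity of the approach.
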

This basically says, in the regime of fixed $k$ and large $n$, $G_{k,n}(1)$ is a square-root improvement over the method-of-types combinatorial factor. We leave its proof to \cref{apx:sqrt-improvement}. In fact, $C_{\mathrm{M}}(k,n)$ achieves the same rate of improvement in the same regime; see \citet[\S 1.2]{mardia2018concentration}. For other regimes, we do not have an explicit comparison. Instead, in \cref{fig:combina} we graphically compare the combinatorial factors for a few $(k,n)$. We observe: (i) $\log G_{k,n}(1)$ and $\log C_{\mathrm{M}}(k,n)$ scale quite closely; (ii) for a fixed $k$, one can check that $G_{k,n}(1) < C_{\mathrm{M}}(k,n)$ for small $n$, and vice versa for large $n$. Note that (ii) explains why in \cref{fig:probs} the bound of \citet{mardia2018concentration} becomes tighter than our bound for very large deviations when $n \in \{100, 200, 500\}$ --- the tightening $\lambda_{k,n}(t)=1$ for $t$ large enough and the exact bound reduces to \cref{cor:combinatorial}.

\begin{figure}[!ht]
\centering
\includegraphics[width=0.35\textwidth]{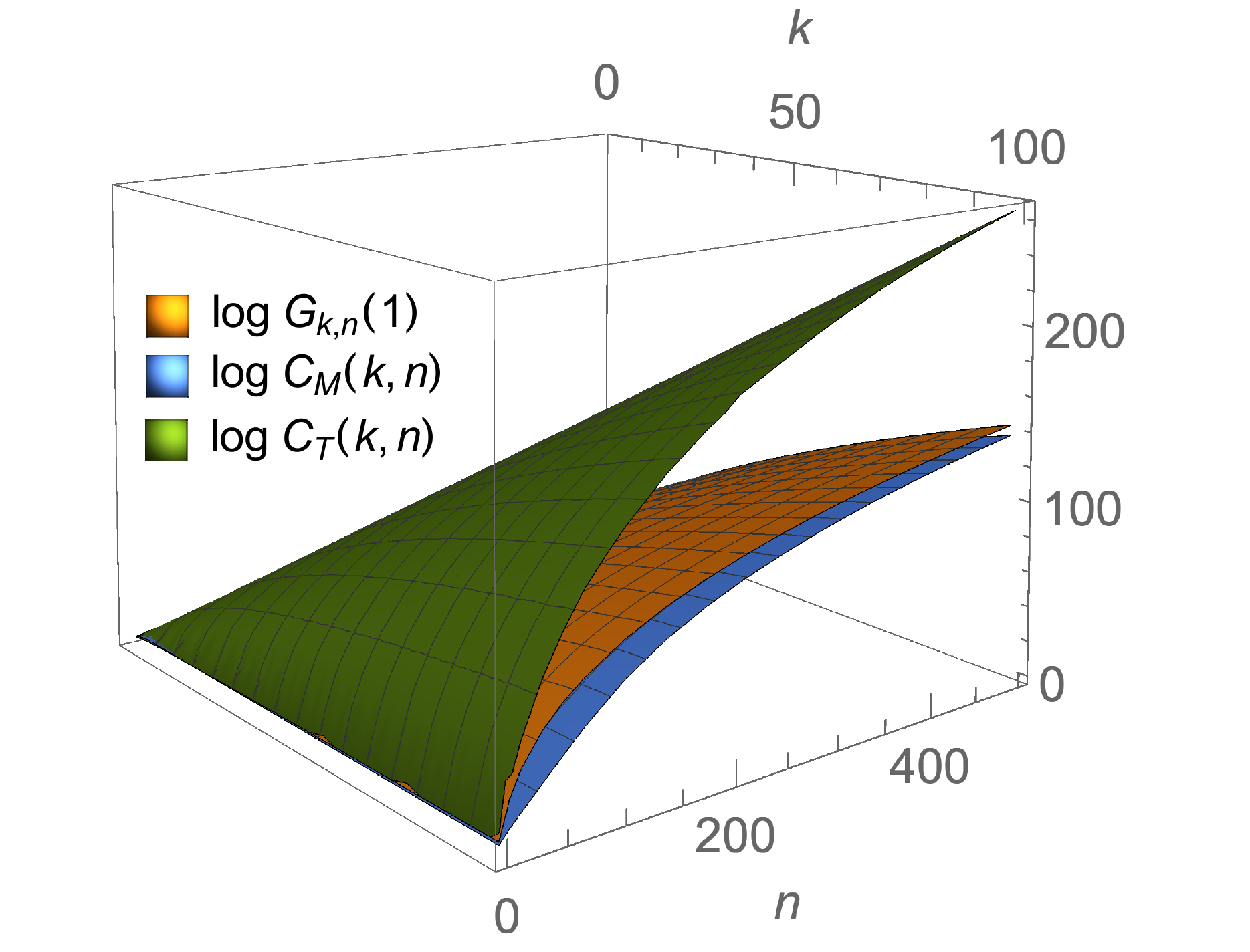}
\includegraphics[width=0.44\textwidth]{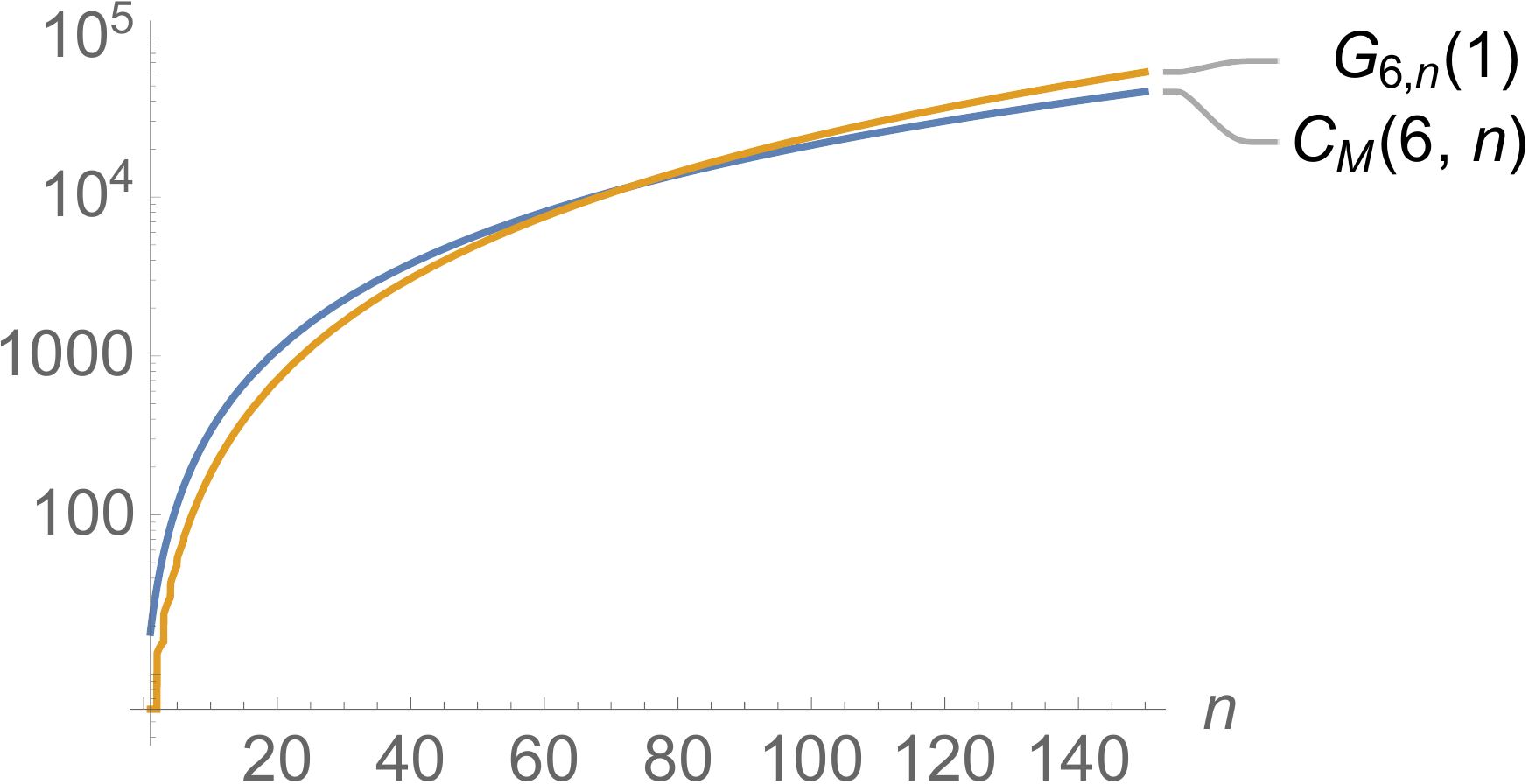}
\caption{Comparison of combinatorial scaling factors $G_{k,n}(1)$ (ours), $C_{\mathrm{M}}(k,n)$ \citep{mardia2018concentration} and $C_{\mathrm{T}}(k,n)$ (method of types).}
\label{fig:combina}
\end{figure}

Finally, we stress that the improved combinatorial factors are by no means optimal. To see this, note that as $n \rightarrow \infty$, $G_{k,n}(1) \rightarrow \infty$ for any fixed $k \geq 2$ and $C_{\mathrm{M}}(k,n) \rightarrow \infty $ for any fixed $k \geq 3$, which would render the bound in the form of \cref{eqs:combinatorial-ineq} meaningless (for fixed $k$ and $t$). However, by \cref{prop:large-n-limit} because $G_{k,\infty}(\lambda)$ only diverges at $\lambda=1$, our bounds stated in \cref{thm:polynomial}, \cref{cor:ineq-uncorrected,cor:bound-correction} do not suffer from this problem. Nevertheless, we expect future improvements on $C(k,n)$ such that $C(k,\infty) < \infty$ for $k \geq 2$.

 \section{Application}
The bound developed can be used to obtain a conservative non-asymptotic critical value for the multinomial likelihood ratio. The bound in \cref{thm:chernoff} can be determined numerically by searching for the minimizer over the unit interval, which is a non-convex but smooth, univariate optimization. Further given a level $\alpha \in (0,1)$ (e.g., $\alpha = 0.05$), by a binary search, a critical value $t_{k,n}(\alpha)$ can be determined such that the bound at $t_{k,n}(\alpha)$ evaluates to $\alpha$. The critical value on the likelihood ratio can be inverted to form a convex confidence region on $p$, which is guaranteed to contain $p$ with probability at least $(1-\alpha)$. This can be applied to the cases where $k$ is comparable to $n$, and the standard large-sample $\chi^2$ approximation is unlikely to be accurate (see \citet{frydenberg1989improved}). We demonstrate with the following example. 

\paragraph{Proportion of the unseen butterflies}
\Cref{tab:butterfly} shows the famous dataset \citep{orlitsky2016optimal} that naturalist Alexander Steven Corbet presented to Ronald Fisher in the 1940's. Corbet spent two years trapping butterflies in Peninsular Malaysia, and his intriguing question to Fisher was how many new species would he discover had he spent another two years on the islands. Corbet's original question led to the fruitful investigation of estimating the number of unseen species; see \citet{fisher1943relation,good1956number,orlitsky2016optimal}. 

\begin{table*}[!htb]
\caption{Butterflies recorded by Corbet}
\centering
\begin{tabular}{llllllllllllllll}
Frequency & 1   & 2  & 3  & 4  & 5  & 6  & 7  & 8  & 9  & 10 & 11 & 12 & 13 & 14 & 15 \\
Species   & 118 & 74 & 44 & 24 & 29 & 22 & 20 & 19 & 20 & 15 & 12 & 14 & 6  & 12 & 6 
\end{tabular}
\label{tab:butterfly}
\end{table*}

However, here we pose a different question --- what percentage of butterflies in Malaya belonged to the species that Corbet had not seen? That is, we want to estimate the proportion of butterflies from all the unseen species. Clearly, the MLE is zero based on the sample. Instead, we ask for an upper bound with $95\%$ confidence. Let $k = 435 + 1$, where 435 is the number of species observed by Corbet. Let $\hat{p} = (\hat{q}, 0)$, where $\hat{q}$ is the empirical distribution corresponding to \cref{tab:butterfly}. The sample size is $n = 2,029$ and the corresponding critical value is $t_{k,n}(\alpha) = 481.20 $. The upper bound is given by the convex program
\begin{equation*}
\max p_{k} \quad \text{s.t.  } p \in \Delta^{k-1},\ n \D(\hat{p} \| p) \leq t_{k,n}(\alpha),
\end{equation*}
which evaluates to $21.1\%$. See also \citet{robbins1968estimating,bickel1986estimating} related to this problem. 

 \section{Conclusion}
We have shown that for a multinomial experiment with alphabet size $k$ and sample size $n$, the moment generating function of the entropy of the empirical distribution relative to the true distribution (scaled by $n$) can be uniformly bounded by a degree-$n$ polynomial $G_{k,n}(\lambda)$ over the unit interval. We generalize Agrawal's result \citep{agrawal2019concentration} on $k=2$ and characterize the family of $G_{k,n}(\lambda)$. The result gives rise to a one-sided Chernoff bound on the relative entropy for deviations $t > \min(\log G_{k,n}(1), k-1)$. The bound significantly improves the classic method-of-types bound and is competitive with the state of the art \citep{mardia2018concentration}.  Further, since the tightest Chernoff bound does not permit a closed-form, we have developed a first-order large-$n$ expansion of the minimizing $\lambda$, which provides a good approximation to the tightest bound in closed form. On a technical note, our approach directly constructs bounds for a generic $k$, in contrast to some other approaches \citep{mardia2018concentration,agrawal2019concentration} that are based on a reduction from multinomial to binomial via the chain rule of relative entropy.   \appendix
\section{Proofs}
\subsection{Proof of \cref{prop:recurrence}}
\begin{proof}
By \cref{prop:p-independent}, $G_{k,n}(\lambda) = G_{k,n}(\lambda, p)$ for $p_k=0$ and $p_1 + \dots + p_{k-1} = 1$. By \cref{eqs:G-kn}, we split $G_{k,n}(\lambda) = A + B$, where $A$ sums over those $X$ with $X_k=0$, and $B$ sums over those with $X_k \geq 1$. Clearly, 
\begin{equation*}
A = \sum_{X_1, \dots, X_{k-1}} \binom{n}{X_1, \dots, X_{k-1}} \prod_{j=1}^{k-1} \left[\lambda X_j / n + (1-\lambda) p_j \right]^{X_j},
\end{equation*}
where the summation is over non-negative integers $X_1, \dots, X_{k-1}$ such that they sum to $n$. Further, $(p_1, \dots, p_{k-1})$ forms a probability vector. Hence, $A = G_{k-1,n}(\lambda)$. 

Now we evaluate 
\begin{equation*}
B = \sum_{X_k=1}^{n} \sum_{X_1+\dots+X_{k-1}=n-X_k} \binom{n}{X_1,\dots,X_k} \prod_{j=1}^{k} \left[\lambda X_j / n + (1-\lambda) p_j \right]^{X_j}.
\end{equation*}
Using the fact that $\binom{n}{X_1,\dots,X_k} = \left(\frac{n}{X_k}\right) \binom{n-1}{X_1, \dots, X_{k-1},X_k-1}$ and $p_k = 0$, we have
\begin{equation} \label{eqs:B-expr}
\begin{split}
B &= \sum_{X_1, \dots, X_{k-1}, X_k'} \frac{n}{X_k'+1} \binom{n-1}{X_1, \dots, X_{k-1}, X_k'}  \left(\frac{\lambda(X_k'+1)}{n} \right)^{X_k'+1} \prod_{j=1}^{k-1} \left[\lambda X_j / n + (1-\lambda) p_j \right]^{X_j} \\
&= \lambda \sum_{X_1, \dots, X_{k-1}, X_k'}  \binom{n-1}{X_1, \dots, X_{k-1}, X_k'}  \left(\frac{\lambda(X_k'+1)}{n} \right)^{X_k'} \prod_{j=1}^{k-1} \left[\lambda X_j / n + (1-\lambda) p_j \right]^{X_j},
\end{split}
\end{equation}
where $X_k':=X_k-1 \in \{0,\dots,n-1\}$ and the summation is over $(X_1, \dots, X_{k-1}, X_k')$ such that they sum to $n-1$. Let $\lambda' := \frac{n-1}{n} \lambda$ and 
\begin{equation*}
p_j' := \frac{1-\lambda}{1-\lambda'} p_j \quad (j=1,\dots,k-1), \quad p_k':= \frac{\lambda/n}{1-\lambda'}
\end{equation*}
such that $\sum_{j=1}^{k} p_j' = \frac{1-\lambda}{1-\lambda'} + \frac{\lambda/n}{1-\lambda'} = 1$. Then we have 
\begin{equation*}
\frac{\lambda(X_k'+1)}{n} = \lambda' \frac{X_k'}{n-1} + (1-\lambda') p_k',
\end{equation*}
and 
\begin{equation*}
\lambda X_j/n + (1-\lambda)p_j = \lambda' \frac{X_j}{n-1} + (1-\lambda') p_j' \quad (j=1,\dots,k-1).
\end{equation*}
Hence, by \cref{eqs:G-kn} and \cref{prop:p-independent}, \cref{eqs:B-expr} becomes
\begin{equation*}
\begin{split}
B &= \lambda \sum_{X_1, \dots, X_{k-1}, X_k'}  \binom{n-1}{X_1, \dots, X_{k-1}, X_k'} \prod_{j=1}^{k} \left[\lambda' X_j / {n-1} + (1-\lambda') p_j' \right]^{X_j}  \\
&= \lambda G_{k,n-1}(\lambda') = \lambda G_{k, n-1}\left(\frac{n-1}{n} \lambda \right).
\end{split}
\end{equation*}
Putting $A$ and $B$ together, we have $G_{k,n}(\lambda) = G_{k-1,n}(\lambda) + \lambda G_{k,n-1} \left(\frac{n-1}{n} \lambda \right)$. 
\end{proof}

\subsection{Proof of \cref{lem:limit-correction}}
We will use the following two properties of the incomplete gamma function 
\begin{equation*}
\Gamma(a, z) := \int_{z}^{\infty} t^{a-1} e^{-t} \dd t.
\end{equation*}

\begin{lemma}[{\citet[\S 8.8]{NIST:DLMF}}] \label{lem:gamma-recurrence}
It holds that
\begin{equation*}
\Gamma(a+1, z) = a \Gamma(a, z) + z^{a} e^{-z},
\end{equation*}
and 
\begin{equation*}
\Gamma(a, z) = \frac{\Gamma(a)}{\Gamma(a-n)} \Gamma(a-n, z) + z^{a-1} e^{-z} \sum_{k=0}^{n-1} \frac{\Gamma(a)}{\Gamma(a-k)} z^{-k},
\end{equation*}
where $n$ is a non-negative integer. 
\end{lemma}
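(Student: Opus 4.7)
The plan is to verify the two identities in order, since the second is obtained by iterating the first. No part of the argument should require anything beyond integration by parts and bookkeeping of gamma-ratios, so I do not anticipate a serious obstacle.

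For the first identity, I would start from $\Gamma(a+1, z) = \int_z^\infty t^a e^{-t}\,\dd t$ and integrate by parts with $u = t^a$ and $\dd v = e^{-t}\,\dd t$. The boundary term at infinity vanishes because $t^a e^{-t} \to 0$ as $t \to \infty$ for every fixed $a$, while the boundary contribution at $t=z$ is $z^a e^{-z}$. The remaining integral is $a\int_z^\infty t^{a-1} e^{-t}\,\dd t = a\,\Gamma(a, z)$, yielding $\Gamma(a+1, z) = a\,\Gamma(a, z) + z^a e^{-z}$.

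For the second identity, I would induct on $n$. The base case $n = 0$ is immediate: the sum on the right is empty and $\Gamma(a)/\Gamma(a) = 1$, so both sides reduce to $\Gamma(a, z)$. Assuming the formula at level $n$, apply the first identity with $a$ shifted to $a-n-1$ to obtain
\begin{equation*}
\Gamma(a-n, z) \;=\; (a-n-1)\,\Gamma(a-n-1, z) + z^{a-n-1} e^{-z},
\end{equation*}
and substitute into the induction hypothesis. Using $(a-n-1)\,\Gamma(a-n-1) = \Gamma(a-n)$ collapses the leading coefficient to $\Gamma(a)/\Gamma(a-n-1)$, giving the $\Gamma(a-n-1, z)$ term at level $n+1$. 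The extra piece produced by the substitution is $\frac{\Gamma(a)}{\Gamma(a-n)} z^{a-n-1} e^{-z} = z^{a-1} e^{-z}\cdot \frac{\Gamma(a)}{\Gamma(a-n)} z^{-n}$, which is precisely the missing $k = n$ summand needed to extend the sum from $\sum_{k=0}^{n-1}$ to $\sum_{k=0}^{n}$. This closes the induction.

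The main thing to be careful about is the gamma-ratio bookkeeping in the inductive step; once one writes $(a-n-1)/\Gamma(a-n) = 1/\Gamma(a-n-1)$ and identifies the new summand with the $k=n$ term, both identities follow with no further effort.
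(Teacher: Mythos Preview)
Your proof is correct: integration by parts gives the first identity, and induction on $n$ with the gamma-ratio simplification $(a-n-1)/\Gamma(a-n)=1/\Gamma(a-n-1)$ cleanly yields the second. The paper does not supply its own proof of this lemma; it is stated as a quoted fact from the DLMF (\S 8.8), so there is nothing to compare against beyond noting that your argument is the standard textbook derivation.
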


\begin{lemma}[{\citet[\S 8.11(iii)]{NIST:DLMF}}] \label{lem:gamma-asymp}
For fixed $\gamma > 1$, as $a \rightarrow \infty$, it holds that
\begin{equation*}
\Gamma(a, \gamma a) = (\gamma a)^{a} e^{-\gamma a} \left\{ \sum_{k=0}^{n} \frac{(-1)^{k} b_k(\gamma)}{(\gamma - 1)^{2k+1}} a^{-k-1} + o(|a|^{-n-1}) \right\},
\end{equation*}
where $b_0(\gamma) = 1$, $b_1(\gamma) = \gamma$, $b_2(\gamma) = \gamma (2\gamma +1)$, and for $k=1,2,\dots$, 
\begin{equation*}
b_k(\gamma) = \gamma(1-\gamma) b'_{k-1}(\gamma) + (2k-1) \gamma b_{k-1}(\gamma).
\end{equation*}
\end{lemma}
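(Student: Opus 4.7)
The plan is to derive the asymptotic expansion via Laplace's method applied to an integral with a dominant endpoint, and then identify the coefficients through a first-order ODE in $\gamma$ obtained by differentiating under the integral sign.

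First, I would rewrite $\Gamma(a, \gamma a)$ in a form suitable for Laplace analysis. Substituting $t = as$ in the integral $\int_{\gamma a}^\infty t^{a-1} e^{-t} \dd t$ gives
\begin{equation*}
\Gamma(a, \gamma a) = a^a \int_\gamma^\infty e^{-a \phi(s)}\, \frac{\dd s}{s}, \qquad \phi(s) := s - \log s.
\end{equation*}
Since $\phi'(s) = 1 - 1/s > 0$ for $s > 1$, the integrand is maximized at the endpoint $s = \gamma > 1$. Applying the Laplace substitution $u := \phi(s) - \phi(\gamma) = (s - \gamma) - \log(s/\gamma)$---a smooth bijection from $[\gamma, \infty)$ onto $[0, \infty)$ with $\dd u = (s-1)\,\dd s / s$---yields
\begin{equation*}
\Gamma(a, \gamma a) = (\gamma a)^a e^{-\gamma a}\, F(a, \gamma), \qquad F(a, \gamma) := \int_0^\infty \frac{e^{-au}}{s(u) - 1}\,\dd u,
\end{equation*}
where $s(u)$ is the smooth inverse of the substitution. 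The inverse function theorem (applicable since $\phi'(\gamma) = (\gamma - 1)/\gamma > 0$) makes $g(u) := 1/(s(u) - 1)$ analytic at $u = 0$ with $g(0) = 1/(\gamma - 1)$, and $s(u) \sim u$ as $u \to \infty$ gives $g(u) = O(1/u)$. Watson's lemma then delivers the formal asymptotic expansion
\begin{equation*}
F(a, \gamma) \sim \sum_{k=0}^\infty \frac{g^{(k)}(0)}{a^{k+1}} =: \sum_{k=0}^\infty \tilde c_k(\gamma)\, a^{-k-1}, \qquad a \to \infty,
\end{equation*}
establishing that $\Gamma(a, \gamma a)$ admits an expansion of the claimed shape for some coefficients $\tilde c_k(\gamma)$.

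To identify these coefficients with the $b_k(\gamma)$ and verify their recurrence, I would differentiate $\Gamma(a, \gamma a) = \int_{\gamma a}^\infty t^{a-1} e^{-t}\,\dd t$ with respect to $\gamma$ to get $\partial_\gamma \Gamma(a, \gamma a) = -a(\gamma a)^{a-1} e^{-\gamma a}$. Combining this with the Laplace representation $\Gamma(a, \gamma a) = (\gamma a)^a e^{-\gamma a} F(a, \gamma)$ and simplifying yields the first-order linear ODE
\begin{equation*}
\gamma\, \partial_\gamma F + a(1 - \gamma) F = -1.
\end{equation*}
Substituting the asymptotic series and matching powers of $a$ forces $\tilde c_0(\gamma) = 1/(\gamma - 1)$ together with $\tilde c_k(\gamma) = \gamma\, \tilde c_{k-1}'(\gamma)/(\gamma - 1)$ for $k \geq 1$. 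Writing $\tilde c_k(\gamma) = (-1)^k b_k(\gamma)/(\gamma - 1)^{2k+1}$ and differentiating reduces this one-line recurrence to exactly
\begin{equation*}
b_k(\gamma) = \gamma(1 - \gamma)\, b'_{k-1}(\gamma) + (2k - 1)\gamma\, b_{k-1}(\gamma),
\end{equation*}
with $b_0 = 1$ and the stated $b_1 = \gamma$, $b_2 = \gamma(2\gamma + 1)$ following by direct evaluation.

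The main obstacle is the rigorous justification of Watson's lemma at the endpoint: one must verify that the change-of-variables map produces an analytic integrand at $u = 0$ (which the inverse function theorem delivers since $\phi'(\gamma) \neq 0$) and control the integral tail. For the latter, noting $s(u) - 1 \geq c(1 + u)$ for some $c > 0$ on $[0, \infty)$, the tail $\int_M^\infty e^{-au}/(s(u) - 1)\,\dd u$ beyond any fixed $M > 0$ is exponentially small in $a$ and absorbed into the $o(|a|^{-n-1})$ error term. Once these technical matters are settled, the coefficient identification is purely algebraic, so the real work is the Laplace analysis around the endpoint $s = \gamma$.
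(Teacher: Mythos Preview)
The paper does not prove this lemma: it is quoted verbatim from \citet[\S 8.11(iii)]{NIST:DLMF} and used as a black box in the proof of \cref{lem:limit-correction}. So there is no in-paper argument to compare against.

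That said, your derivation is correct and is essentially the standard one. The Laplace/Watson reduction via $t=as$ and then $u=\phi(s)-\phi(\gamma)$ is exactly how such endpoint expansions are obtained; your computation $\Gamma(a,\gamma a)=(\gamma a)^a e^{-\gamma a}\int_0^\infty e^{-au}/(s(u)-1)\,\dd u$ checks out, and Watson's lemma applies since $g(u)=1/(s(u)-1)$ is analytic at $0$ with the stated decay at infinity. Your ODE trick for the coefficients is clean: differentiating $\Gamma(a,\gamma a)$ in $\gamma$ and comparing to the Laplace representation gives $\gamma\,\partial_\gamma F + a(1-\gamma)F=-1$, and matching powers of $a$ in $F\sim\sum_k \tilde c_k(\gamma)a^{-k-1}$ forces $\tilde c_0=1/(\gamma-1)$ and $\tilde c_k=\gamma\tilde c_{k-1}'/(\gamma-1)$. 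The substitution $\tilde c_k=(-1)^k b_k/(\gamma-1)^{2k+1}$ then reproduces the stated recurrence for $b_k$ exactly. The only places needing care are the ones you flag---analyticity of $g$ at $u=0$ (immediate from $\phi'(\gamma)\neq 0$) and the tail bound---and both are routine.
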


\begin{proof}[Proof of \cref{lem:limit-correction}]
We first express $G_{2,n}(\lambda)$ in terms of the incomplete gamma function:
\begin{equation} \label{eqs:G-Gamma-form}
\begin{split}
G_{2,n}(\lambda) &= \sum_{m=0}^{n} \frac{n!}{n^m (n-m)!} \lambda^m \\
&= \lambda^n \sum_{m=0}^{n} \frac{n!}{n^m (n-m)!} \lambda^{-(n-m)} \\
&= n^{-n} \lambda^{n} n! \sum_{m=0}^{n} \frac{(n/\lambda)^m}{m!} = n^{-n} \lambda^n e^{n / \lambda} \Gamma(n+1, n / \lambda),
\end{split}
\end{equation}
where we used the fact \citep[Eq.~8.4.8]{NIST:DLMF} that 
\begin{equation*}
\Gamma(n+1, z) = n! e^{-z} \sum_{k=0}^{n} \frac{z^k}{k!}, \quad n=0,1,2,\dots.
\end{equation*}
Using \cref{lem:gamma-recurrence,lem:gamma-asymp}, as $n \rightarrow \infty$, we have 
\begin{equation*}
\begin{split}
& \quad G_{2,n}(\lambda) \\
&= (\lambda / n)^n e^{n / \lambda} \left\{ n \Gamma(n, n/\lambda) + (n/\lambda)^n e^{-n/\lambda} \right\} \\
&= 1 + (\lambda / n)^n e^{n / \lambda} n \Gamma(n, n/\lambda) \\
&= 1 + (\lambda / n)^n e^{n / \lambda} n  (n/\lambda)^{n} e^{-n/\lambda} \left\{ \frac{n^{-1}}{\lambda^{-1} - 1} - \frac{\lambda^{-1}}{(\lambda^{-1} - 1)^3} n^{-2} + O(n^{-3}) \right \} \\
&= \frac{1}{1-\lambda} - \frac{\lambda^2}{(1-\lambda)^3} n^{-1} + O(n^{-2}).
\end{split}
\end{equation*}
For $k \geq 2$, it follows from \cref{eqs:Gkn-deriv-form} that 
\begin{equation*}
\begin{split}
& \quad G_{k,n}(\lambda) \\
&= \frac{1}{(k-2)!} \frac{\dd^{k-2}}{\dd \lambda^{k-2}} \left( \frac{\lambda^{k-2}}{1-\lambda} - \frac{\lambda^{k}}{(1-\lambda)^3} n^{-1} + O(n^{-2}) \right) \\
&= (1-\lambda)^{-(k-1)} - \frac{k(k-1)\lambda^2}{2(1-\lambda)^{k+1}} n^{-1} + O(n^{-2}),
\end{split}
\end{equation*}
where we used the relation (twice)
\begin{equation*}
\frac{\dd^{k-2}}{\dd \lambda^{k-2}} \left[\frac{\lambda^{k}}{(1-\lambda)^3} \right] = \frac{k! \lambda^2}{2 (1-\lambda)^{k+1}},
\end{equation*}
which can be shown via induction. 

Finally, we have 
\begin{equation*}
\begin{split}
& \quad \frac{k-1}{1-\lambda} G_{k,n}(\lambda) - G'_{k,n}(\lambda) \\
&= \frac{1}{(1-\lambda)^{k-1}} \frac{\dd}{\dd \lambda} \left[-(1-\lambda)^{k-1} G_{k,n}(\lambda) \right] \\
&= \frac{1}{(1-\lambda)^{k-1}}  \frac{\dd}{\dd \lambda} \bigg[-(1-\lambda)^{k-1} \bigg( (1-\lambda)^{-(k-1)} - \frac{k(k-1)\lambda^2}{2(1-\lambda)^{k+1}} n^{-1} + O(n^{-2}) \bigg) \bigg] \\
&= \frac{1}{(1-\lambda)^{k-1}}  \frac{\dd}{\dd \lambda} \left[-1 + \frac{k(k-1)\lambda^2}{2(1-\lambda)^2} n^{-1} + O(n^{-2}) \right] \\ 
&= \frac{k(k-1)\lambda}{(1-\lambda)^{k+2}} n^{-1} + O(n^{-2}),
\end{split}
\end{equation*}
and hence 
\begin{equation*}
n \left( \frac{k-1}{1-\lambda} G_{k,n}(\lambda) - G'_{k,n}(\lambda) \right) \rightarrow \frac{k(k-1)\lambda}{(1-\lambda)^{k+2}}.
\end{equation*}
\end{proof}

\subsection{Proof of \cref{prop:sqrt-improvement}} \label{apx:sqrt-improvement}
\begin{lemma}[{\citet[\S 8.11(v)]{NIST:DLMF}}] \label{lem:Gamma-z-z-asymp}
As $z \rightarrow \infty$, it holds that
\begin{equation*}
\Gamma(z,z) = z^{z-1} e^{-z} \left(\sqrt{\frac{\pi}{2}} z^{1/2} + O(1) \right).
\end{equation*}
\end{lemma}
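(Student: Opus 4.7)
The plan is to establish the claimed asymptotic by applying Laplace's method directly to the integral representation
\[
\Gamma(z,z) = \int_{z}^{\infty} t^{z-1} e^{-t}\,\dd t.
\]
The integrand, rewritten as $\exp\bigl((z-1)\log t - t\bigr)$, has its ``boundary saddle'' at $t=z$, where the derivative of the exponent is $(z-1)/z - 1 = -1/z$, i.e.\ nearly zero. The natural scaling to resolve the transition is $t = z + u\sqrt{z}$, which compresses the neighborhood of width $\sqrt{z}$ around $t=z$ that carries the bulk of the mass into an $O(1)$ range for $u$. After the substitution, the prefactors $z^{z-1} e^{-z}$ and $\sqrt{z}$ (from $\dd t$) pull out, leaving
\[
\Gamma(z,z) = z^{z-1} e^{-z}\sqrt{z}\int_{0}^{\infty} (1+u/\sqrt{z})^{z-1} e^{-u\sqrt{z}}\,\dd u.
\]

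The core step is to analyze the integrand via a Taylor expansion of the exponent. Writing $(z-1)\log(1+u/\sqrt{z}) - u\sqrt{z}$ and expanding $\log(1+u/\sqrt{z}) = u z^{-1/2} - \tfrac{1}{2} u^2 z^{-1} + \tfrac{1}{3} u^3 z^{-3/2} - \cdots$, the linear-in-$\sqrt{z}$ term cancels against $-u\sqrt{z}$, and what remains is $-\tfrac{1}{2}u^2 + (u^3/3 - u) z^{-1/2} + O(z^{-1})$ uniformly on the main range $0 \le u \le z^{1/6}$. Hence on this range the integrand equals $e^{-u^2/2}\bigl(1 + O(z^{-1/2})\bigr)$, and $\int_{0}^{\infty} e^{-u^2/2}\,\dd u = \sqrt{\pi/2}$ gives the leading coefficient.

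The main technical obstacle is controlling the error with the required precision: the $O(1)$ remainder in the claim is exactly the product of the $\sqrt{z}$ prefactor and an $O(z^{-1/2})$ correction to the integral, so the bookkeeping has to be tight. I would split the integral into the ``central'' regime $u \in [0, z^{1/6}]$ and the ``tail'' regime $u > z^{1/6}$. On the central regime, the uniform Taylor bound above makes the correction term integrable against $e^{-u^2/2}$, yielding an $O(z^{-1/2})$ multiplicative error. For the tail, I would use the convexity-based bound $\log(1+x) \le x - x^2/2 + x^3/3$ for small $x$ together with the crude bound $(z-1)\log(1+u/\sqrt{z}) \le (z-1)\cdot u/\sqrt{z}$ to show the integrand is dominated by $e^{-c u^2}$ for $u$ up to a fixed multiple of $\sqrt{z}$, and by a decaying exponential in $t = z + u\sqrt{z}$ beyond that; in either case the contribution is super-polynomially small and absorbable into the $O(1)$ remainder.

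Combining the two contributions,
\[
\int_{0}^{\infty}(1+u/\sqrt{z})^{z-1} e^{-u\sqrt{z}}\,\dd u = \sqrt{\pi/2} + O(z^{-1/2}),
\]
so that multiplying by $z^{z-1} e^{-z}\sqrt{z}$ yields the stated expansion $\Gamma(z,z) = z^{z-1} e^{-z}\bigl(\sqrt{\pi/2}\,z^{1/2} + O(1)\bigr)$. As an alternative route that bypasses some of the tail analysis, one can instead apply the uniform asymptotic expansion of $\Gamma(a,z)$ in the transition region $z = a$ in terms of the error function (Temme's expansion), which gives the same leading behavior and an explicit $O(1)$ error; but the Laplace-method derivation above is self-contained and sufficient for the use made of the lemma in the proof of \cref{prop:sqrt-improvement}.
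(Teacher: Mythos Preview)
Your Laplace-method derivation is correct in outline and would yield the stated asymptotic; the substitution $t=z+u\sqrt{z}$, the cancellation of the linear term in the exponent, and the emergence of $\int_0^\infty e^{-u^2/2}\,\dd u=\sqrt{\pi/2}$ are all right, and the central/tail split is the standard way to make the $O(z^{-1/2})$ correction rigorous. One small point of care in the tail: the bound $\log(1+x)\le x-x^2/2+x^3/3$ alone does not give domination by $e^{-cu^2}$ once $u$ is comparable to $\sqrt{z}$, since the cubic term then competes with the quadratic; you need instead something like $\log(1+x)\le x-\tfrac{x^2}{2(1+x)}$ on $[0,\infty)$, or simply the concavity of $(z-1)\log t-t$ on $t\ge z$, to get a clean uniform bound. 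With that fix the argument goes through.

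As for comparison with the paper: the paper does not prove this lemma at all. It is quoted directly from the NIST Digital Library of Mathematical Functions (\S 8.11(v)) as a known asymptotic and is used as a black box in the proof of \cref{prop:sqrt-improvement}. Your write-up therefore supplies a self-contained derivation where the paper simply cites the literature. That is a genuine addition rather than a different route to the same proof; if you want to align with the paper's style you could simply cite DLMF, but your Laplace argument (or the Temme uniform expansion you mention) is exactly how one would establish the quoted result from scratch.
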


\begin{proof}[Proof of \cref{prop:sqrt-improvement}]
For $k=2$, we have
\begin{equation*}
\begin{split}
G_{2,n}(1) &\stackrel{\mathrm{(i)}}{=} (e/n)^{n} \Gamma(n+1, n) \\
& \stackrel{\mathrm{(ii)}}{=} (e/n)^{n}n \Gamma(n,n) + 1 \\
&\stackrel{\mathrm{(iii)}}{=} \sqrt{\frac{\pi}{2}} n^{1/2} + O(1),
\end{split}
\end{equation*}
where (i) follows from \cref{eqs:G-Gamma-form}, (ii) from \cref{lem:gamma-recurrence} and (iii) from \cref{lem:Gamma-z-z-asymp}. And hence, 
\begin{equation*}
\lim_{n \rightarrow \infty} \frac{\log G_{2,n}(1)}{\log \binom{n+1}{1}} = \lim_{n \rightarrow \infty} \frac{\log n^{1/2}}{\log n} = 1/2.
\end{equation*}
By a similar computation for $k=3,4,\dots$, one can show that
\begin{equation*}
\lim_{n \rightarrow \infty} \frac{\log G_{k,n}(1)}{\log \binom{n+k-1}{k-1}} = \lim_{n \rightarrow \infty} \frac{\log n^{(k-1)/2}}{\log n^{k-1}} = 1/2.
\end{equation*}
\end{proof}

\section{Additional plot}
\begin{figure*}[!h]
\centering
\includegraphics[width=.9\textwidth]{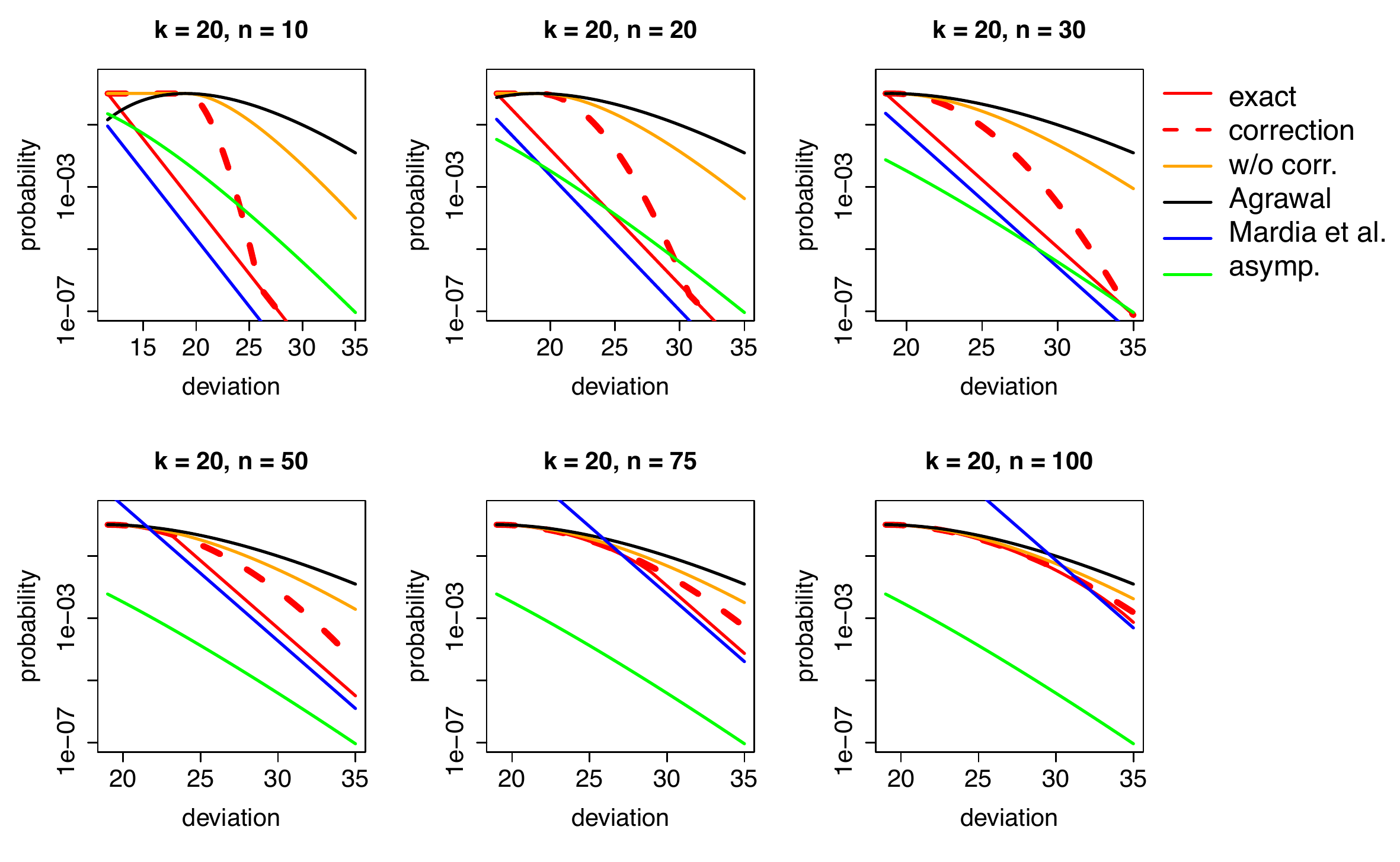}
\caption{Comparison of probability bounds on $\P(n \D(\hat{p}_{k,n} \| p) > t)$ for $k=20$ and $t > \min(\log G_{k,n}(1), k-1)$. The $y$-axis is in logarithmic scale. The methods compared include: ``exact'' (\cref{thm:chernoff} from numerical minimization), ``correction'' (\cref{cor:bound-correction}), ``w/o corr.'' (\cref{cor:ineq-uncorrected}), \citet[Theorem 1.2]{agrawal2019concentration}, \citet[Theorem 3]{mardia2018concentration}, and the asymptotic bound that is the exact probability when $n \rightarrow \infty$. Note that ``asymp.'' might not be a valid bound and is for reference only.}
\label{fig:probs-ii}
\end{figure*} 
\section*{Acknowledgment}
F.~Richard Guo thanks Jon Wellner for helpful comments. The research was supported by ONR Grant N000141912446.

\end{document}